\documentclass[12pt, reqno, a4paper]{amsart}

\usepackage{ amssymb, amsmath, enumerate, amsfonts, amsthm, mathrsfs, url, bm, mathtools}

\usepackage{xcolor}  	
\usepackage{hyperref}
\hypersetup{
colorlinks,
   linkcolor={cyan!80!black},
   citecolor={cyan!80!black},
 urlcolor={cyan!80!black}
}

\usepackage{color}

\usepackage[margin=1in]{geometry}

\RequirePackage{doi}

\usepackage{amscd}
\usepackage{amsfonts}
\usepackage{float}
\usepackage{color}
\usepackage[
backend=biber,
style=alphabetic,
]{biblatex}
\usepackage{bookmark}

\renewbibmacro{in:}{}
\DeclareFieldFormat{title}{#1}

\DeclareFieldFormat[article]{title}{\mkbibemph{#1}}       
\DeclareFieldFormat[incollection]{title}{\mkbibemph{#1}}  
\DeclareFieldFormat[book]{title}{\mkbibemph{#1}}          
\DeclareFieldFormat[incollection]{booktitle}{#1}          
\DeclareFieldFormat[article]{journaltitle}{#1}            

\AtEveryBibitem{%
  \ifentrytype{misc}{\DeclareFieldFormat{title}{\mkbibemph{#1}}}{}}

\DeclareFieldFormat{eprint:eprint}{arXiv:\href{https://arxiv.org/abs/#1}{#1}}

\DeclareFieldFormat[inproceedings]{title}{\mkbibemph{#1}}

\DeclareFieldFormat[inproceedings]{booktitle}{#1}

\usepackage{amssymb}

\newtheorem{theorem}{Theorem}[section]
\newtheorem{lemma}{Lemma}[section]
\newtheorem{example}{Example}[section]

\newtheorem{proposition}{Proposition}[section]

\theoremstyle{definition}
\newtheorem{definition}{Definition}[section]

\theoremstyle{remark}

\numberwithin{equation}{section}

\renewcommand{\Re}{\mathrm{Re}}

\newcommand{\C}{{\mathbb C}}
\newcommand{\R}{{\mathbb R}}

\newcommand{\E}{{\mathbb E}}
\newcommand{\N}{{\mathbb N}}
\newcommand{\T}{\mathbb T}
\newcommand{\X}{\mathbb X}

\renewcommand{\leq}{\leqslant}
\renewcommand{\geq}{\geqslant}

\begin{document}

\title[Low moments of automorphic random multiplicative function sums]
{Low moments of automorphic random multiplicative function sums}

\author{Sun-Kai Leung}
\address{Mathematical Institute, University of Oxford, Andrew Wiles Building\\
Radcliffe Observatory Quarter, Woodstock Road\\
Oxford OX2 6GG\\
United Kingdom}
\email{sunkaileung@gmail.com}

\subjclass[2020]{Primary 11N37; Secondary 11K65, 11M41, 60G57.}

\date{}

\begin{abstract}
We determine the order of the low moments of partial sums of random
multiplicative functions associated with Euler products of bounded degree
whose unitary local parameters satisfy a prime-square cancellation condition,
thereby generalizing Harper's seminal work.  The result
applies to irreducible compact-group representations of unitary or symplectic
type, including the higher-rank Sato--Tate model $SU(d)$ and the odd
symmetric-power Sato--Tate model $\operatorname{Sym}^{m}SU(2)$.
\end{abstract}

\maketitle

\section{Introduction}

\textit{Random multiplicative functions} (RMFs) provide probabilistic models
for families of multiplicative functions, including Archimedean characters
$n\mapsto n^{it}$ for large real numbers $t$ and Dirichlet characters modulo a large prime $p$ in the
Steinhaus case (see, for instance, \cite{MR1815216,harper2025bettersquarerootcancellationnumber}).
Similarly, Sato--Tate RMFs model Hecke eigenvalues under harmonic averaging, as in \cite{MR2035301,MR3918448} and the recent
works \cite{hamdan2026lowmomentsheckeeigenvalue,xu2026typicalsizeheckeeigenvalue}.
Motivated by automorphic $L$-functions of higher degree, we
consider the following class of RMFs.

\begin{definition}[Automorphic random multiplicative function]\label{def:armf}
Given $d\in\N,$ let
\begin{align*}
 \boldsymbol\alpha(p)=(\alpha_1(p),\ldots,\alpha_d(p))\in\T^d
\end{align*}
be independent and identically distributed random vectors as $p$ varies over
the primes, where $\T:=\{z\in\C:|z|=1\},$ and define the complete symmetric
polynomials $h_k$ by
\begin{align*}
 \prod_{j=1}^d(1-\alpha_jz)^{-1}
 =\sum_{k\geq0}h_k(\boldsymbol\alpha)z^k.
\end{align*}
We set $\X(p^k):=h_k(\boldsymbol\alpha(p))$ and extend $\X$
multiplicatively, with $\X(1)=1.$  We call $\X$ an \textit{automorphic
random multiplicative function of degree $d$}.
\end{definition}

The terminology refers to the formal Euler-product identity
\begin{align*}
 \sum_{n\geq1}\frac{\X(n)}{n^s}
 =\prod_p\prod_{j=1}^d\left(1-\frac{\alpha_j(p)}{p^s}\right)^{-1},
\end{align*}
in which the random variables $\alpha_j(p)$ play the role of the Satake parameters
of a tempered automorphic $L$-function, without asserting automorphy of an
individual sample.

Harper~\cite{HarperLow} determined the order of magnitude of the low moments
of partial sums of Steinhaus RMFs.  We extend his result to automorphic RMFs
for which the complete symmetric polynomials $h_1,h_2$ are centered and
$h_1$ has unit variance.

\begin{theorem}\label{thm:main}
Given $d\in\N,$ let $\X$ be an automorphic random multiplicative function of
degree $d$ satisfying
\begin{equation}\label{eq:conditions}
 \E[h_1(\boldsymbol\alpha)]=0,\qquad
 \E[|h_1(\boldsymbol\alpha)|^2]=1,\qquad
 \E[h_2(\boldsymbol\alpha)]=0.
\end{equation}
Then uniformly for all sufficiently large $x$ and $0\leq q\leq1,$ we have
\begin{align*}
 \E\left[\left|\sum_{n\leq x}\X(n)\right|^{2q}\right]
 \asymp_d
 \left(\frac{x}{1+(1-q)\sqrt{\log\log x}}\right)^q.
\end{align*}
The implied constants and the lower threshold for $x$ depend only on $d,$
uniformly over the local laws satisfying \eqref{eq:conditions}.
\end{theorem}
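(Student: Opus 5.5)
We follow Harper's strategy for the Steinhaus case, adapting each step to the degree-$d$ Euler product. The plan has three steps: reduce to Euler products on the critical line, compare to a Gaussian multiplicative chaos model, and then handle the upper and lower bounds separately. The conditions \eqref{eq:conditions} are used only to make the Euler factor $\prod_j(1-\alpha_j(p)p^{-1/2-it})^{-1}$ behave, to second order, like the Steinhaus factor. For $q$ near $0$ the bound is trivial from the $q=1$ case and Hölder, and the content lies in tracking the $(1-q)\sqrt{\log\log x}$ saving. So we fix $2/3\leq q\leq 1$ and obtain smaller $q$ by interpolation between the lower bound at $q=2/3$ (via Hölder with a fourth-moment bound $\E|\sum_{n\le x}\X(n)|^4\ll x^2(\log x)^{C_d}$, which is too weak). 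The alternative route is to run the argument for all $q$, as Harper does.

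\emph{Step 1: reduction to the critical line.} Write $\sum_{n\le x}\X(n)=\sum_{P(n)\le x}$ and condition on the small primes. Decompose by the largest prime factor $P$, giving $\sum_{\sqrt x<p\le x}\X(p)\sum_{m\le x/p,\,P(m)<p}\X(m)$ plus the smooth part. Conditional on $(\boldsymbol\alpha(p'))_{p'<p}$, the outer sum is a martingale (using $\E h_1=0$). The conditional $2q$-th moment is at most $(\sum_p \E_{|h_1|^2=1}|\cdot|^2)^q$ by conditional Jensen. Next apply Parseval/Plancherel for Dirichlet series to convert the inner variance into $\frac{x}{\log x}\int_{-1/2}^{1/2}|F(1/2+it)|^2\,dt$, where
\[ F(s)=\prod_{p\le x}\prod_{j=1}^d\Bigl(1-\frac{\alpha_j(p)}{p^{s}}\Bigr)^{-1} \]
(with suitable truncations of $x$ at scale $x^{1/\log\log x}$ and smoothing). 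This yields the upper bound $\E|\cdot|^{2q}\ll (x/\log x)^q\,\E\bigl[(\int|F(1/2+it)|^2dt)^q\bigr]$. For the lower bound, we reverse this using Harper's conditioning argument: a lower bound for the $q$-th moment of the conditional variance, plus a Paley–Zygmund/Khintchine-type comparison for martingales, which needs bounded fourth moments of $h_1$ (automatic since $|h_1|\le d$).

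\emph{Step 2: Euler product computations.} The key input is the "two-dimensional Girsanov" estimate. For $|t_1-t_2|$ in the relevant ranges and $\sigma$ near $1/2$, we need
\[ \E\Bigl[\prod_{p\in I}|F_p(\sigma+it_1)|^{2q}|F_p(\sigma+it_2)|^{2}\Bigr], \]
where $F_p$ is the local factor. The log of the local factor is $\sum_k p_k(\boldsymbol\alpha)p^{-ks}/k$, with power sums $p_1=h_1$ and $p_2=2h_2-h_1^2$. Taylor expansion gives local expectations of the form $1+\frac{c(t_1,t_2,q)}{p}+O(p^{-3/2})$. Here $\E h_1=0$ kills the order-$p^{-1/2}$ term. $\E|h_1|^2=1$ makes the $p^{-1}$ coefficient exactly the Steinhaus one, namely $q^2+\ldots+2q\cos((t_1-t_2)\log p)$. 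Finally, $\E h_2=0$ together with $\E h_1^2$ combine so that the $p_2$ term at $p^{-2s}$ contributes only through $\E p_2 = -\E h_1^2$. That residual quantity is not a priori zero, but it pairs with $\Re(h_1^2 p^{-2s})$ from squaring the $p_1$ term. The two cancel exactly when we compute $\E|\prod(1-\alpha_j p^{-s})^{-1}|^{2q}$ to order $1/p$, since $\log$ of the factor equals $h_1p^{-s}+(h_2-h_1^2/2)p^{-2s}+\ldots$ and $\E[h_2]=0$. This is the "prime-square cancellation". Uniformity in the law follows since all coefficients are polynomials in $\boldsymbol\alpha\in\T^d$, bounded in terms of $d$. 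Consequently all of Harper's Euler product lemmas (Lemmas 1–4 of \cite{HarperLow}) hold with constants depending on $d$, and in addition the random walk $\sum_{p\le e^{j}}\Re(h_1(\boldsymbol\alpha(p))p^{-1/2-it})$ has the Gaussian-like increments needed for a ballot-type estimate.

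\emph{Step 3: multiplicative chaos bounds.} With these inputs, we repeat Harper's argument. For the upper bound, we insert the barrier event that the partial Euler products $\prod_{p\le x^{e^{-k}}}|F_p|$ stay below $\frac{\log x}{e^{k}}e^{C\sqrt{\log\log x}\,?}$ times a linear barrier, and bound the off-barrier contribution via the two-dimensional estimate. On the barrier, the ballot theorem for the approximately Gaussian random walk gives the $1/\sqrt{\log\log x}$ gain per level, producing $(1+(1-q)\sqrt{\log\log x})^{-q}$. The lower bound uses the matching ballot lower bound and a second moment computation on the barrier event. \textbf{The main obstacle} is the ballot/Gaussian approximation step. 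The increments are now $\Re(h_1 p^{-it})$ with a general law, and we need a quantitative Berry–Esseen comparison of the tilted (Girsanov-weighted) increments to Gaussians, uniformly over laws satisfying \eqref{eq:conditions}. I would first try Harper's approach of computing characteristic functions of sums over dyadic prime blocks $(x^{e^{-k-1}},x^{e^{-k}}]$ directly from the local expansion. The only law-dependent inputs are the second moments fixed by \eqref{eq:conditions}, so the higher cumulants are $O_d(\sum p^{-3/2})$ and negligible.
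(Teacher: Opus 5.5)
\textbf{The pseudo-variance does not cancel.} Your reductions (conditional Jensen plus Parseval for the upper bound, conditional variance plus Paley--Zygmund for the lower bound) match the paper's in outline, but the identity at the core of your Step~2 is false.

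Write $z=p^{-s}$, $\rho:=\E[h_1(\boldsymbol\alpha)^2]$ and $\log L=h_1z+(h_2-h_1^2/2)z^2+O_d(|z|^3)$. The $p_2$-term contributes $-q\Re(\rho z^2)$ to $\E|L|^{2q}$, while squaring $2q\Re(h_1z)$ contributes $q^2|z|^2+q^2\Re(\rho z^2)$. Hence $\E|L|^{2q}=1+q^2|z|^2+q(q-1)\Re(\rho z^2)+O_d(|z|^3)$, and the cancellation occurs only for $q\in\{0,1\}$. What $\E[h_2]=0$ removes is a different term, $2q\Re(\E[h_2]z^2)$.

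Likewise, the $p^{-1}$-coefficient of your two-point expectation is $q^2+1+2q\cos((t_1-t_2)\log p)+q(q-1)\Re(\rho p^{-2it_1})+2q\Re(\rho p^{-i(t_1+t_2)})$, not the Steinhaus one. Nothing in \eqref{eq:conditions} constrains $\rho$, and $\rho=1$ for every symplectic example, e.g.\ Sato--Tate with $h_1=2\cos\theta_p$. Summed over $p\leq x$, the extra terms have size $\log\min\{\log x,|t_1|^{-1}\}$ and $\log\min\{\log x,|t_1+t_2|^{-1}\}$. So the field is non-stationary and is not Steinhaus-like near the origin (for $\rho=1$ it is Rademacher-like). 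It is also not invariant under $t\mapsto t+N$, which your reduction of the Parseval integral to $\int_{-1/2}^{1/2}$ tacitly uses. Harper's Steinhaus Euler-product lemmas and barrier estimates therefore do not carry over with $d$-dependent constants, as you assert.

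What is missing is a device that neutralizes $\rho$. For the upper bound, the paper splits $t$ into dyadic intervals $[h,2h]$ and unit intervals $[n,n+1]$. It discards the primes below $\exp(c_0/h)$, respectively $\exp(c_0\log^2(n+3))$, at a cost of $(\log z)^q$ (Jensen plus the $\rho$-free computation at $q=1$). The $\rho$-part of the covariance then becomes $O(1)$ by the prime number theorem, and the paper transfers to the Steinhaus product by prime-by-prime Gaussian replacement and Kahane's inequality. For the lower bound, it works on $J=[1/3,1/2]$ at $\sigma=1/2+4V/\log y$, where $t+t'\in[2/3,1]$, and compares with Harper's Rademacher product instead. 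This transference also avoids the Berry--Esseen analysis of tilted non-Gaussian increments on which your Step~3 would depend.

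\textbf{Further gaps.} Step~1 is not a martingale computation here. Once $p^2\leq x$, the contribution of $p$ involves $h_k(\boldsymbol\alpha(p))$ with $k\geq2$, and \eqref{eq:conditions} constrains neither $\E[h_k]$ for $k\geq3$ nor $\E[h_k\overline{h_l}]$ for $k+l\geq3$. For instance, for $\operatorname{Sym}^3SU(2)$ one already has $\E[h_4]=1$, coming from the discriminant of binary cubics. So $\X$ is not orthogonal, and the conditional variances acquire off-diagonal terms. The paper controls these with the kernel $g$ of Lemma~\ref{lem:covariance}, which vanishes at $(p^k,p^l)$ for $1\leq k+l\leq2$.

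In the lower bound you still have to remove the smooth part $U=T_{\sqrt x}(x)$. Since the law of $h_1$ need not be symmetric, Harper's sign-change argument is unavailable; the paper uses an independent copy before applying Paley--Zygmund.

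Finally, the small-$q$ range is left unresolved. The upper bound for $0<q<2/3$ needs the $\sqrt{\log\log x}$ saving, so it follows from $q=2/3$ by Jensen, not from $q=1$. The lower bound follows by H\"older between the lower bound at $q=2/3$ and the upper bound at $q=3/4$, both of which have $D_q(x)\asymp\sqrt{\log\log x}$, so no fourth-moment estimate is needed.
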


The main new ingredient in our argument is a two-sided transference principle for low moments
of Euler-product masses.  A prime-by-prime Gaussian replacement, combined with
Kahane's inequality, permits comparison with the Steinhaus model for the upper
bound and with the Rademacher model for the lower bound.  This approach is
inspired by the Gaussian comparison methods of Saksman--Webb
\cite{SaksmanWebb} and Gorodetsky--Wong \cite[Section~3]{GW2025}.  Related
blockwise Gaussian approximations for the degree-two Sato--Tate model appear in
\cite[Section~5]{hamdan2026lowmomentsheckeeigenvalue}.

For the upper bound, the initial reduction of Gorodetsky--Wong
\cite[Lemma~1.2 and Section~2]{GW2025} reduces the problem to an Euler-product
integral, which we transfer to the Steinhaus model and estimate using Harper's
Euler-product bound in the formulation of Hamdan
\cite[Proposition~1.2]{Hamdan}.  For the lower bound, we adapt Harper's
large-prime reduction \cite[Propositions~3--4]{HarperLow} and transfer the
resulting Euler-product integral to the Rademacher model, where Harper's barrier
estimates \cite[Sections~5.2--5.3]{HarperLow} apply.  The identities in
\eqref{eq:conditions} provide the moment matching and bounded covariance errors
required in both comparisons.

Compact-group models furnish natural examples. Given a compact group $G$
and a nontrivial irreducible unitary representation $V$, we may choose an
independent Haar-distributed element of $G$ at every prime and use its
eigenvalues on $V$ as the local parameters.  The character identities,
together with the orthogonality relations, give
\begin{align*}
 \E[h_1(\boldsymbol\alpha)]=0,\qquad
 \E[|h_1(\boldsymbol\alpha)|^2]=1,\qquad
 \E[h_2(\boldsymbol\alpha)]=\dim(\operatorname{Sym}^2V)^G.
\end{align*}
Therefore, Theorem~\ref{thm:main} applies whenever $V$ has no invariant
symmetric bilinear form, as happens for representations of \textit{unitary} or
\textit{symplectic} type, whose definitions we recall in Section~\ref{sec:examples}.

The standard representations of the unitary groups $U(d)$ extend the Steinhaus
model to arbitrary rank.

\begin{example}[Higher-rank Steinhaus RMF]\label{ex:steinhaus}
Let $G=U(d)$, let $V=\C^d$ be its standard representation, and, at every
prime, let $\boldsymbol\alpha(p)$ be the unordered spectrum of an independent
Haar-distributed matrix in $U(d)$.  The representation $V$ is irreducible,
of unitary type, and of dimension $d,$ and hence \eqref{eq:conditions} holds.
When $d=1,$ we have
$h_k(\alpha)=\alpha^k$ with $\alpha$ uniform on $\T,$ and
Theorem~\ref{thm:main} recovers \cite[Theorem~1]{HarperLow}.
\end{example}

The irreducible representations of the special unitary group $SU(2)$ give the higher symmetric-power
models associated with the usual Sato--Tate law.

\begin{example}[Odd symmetric-power Sato--Tate RMF]\label{ex:odd-sym}
Let $G=SU(2)$, let $V=\operatorname{Sym}^m(\C^2)$ for odd $m,$ and, at every
prime, let $\boldsymbol\alpha(p)$ be the unordered spectrum of an independent
Haar-distributed element of $SU(2)$ acting on $V$.  This spectrum is
\begin{align*}
 e^{i(m-2j)\theta_p}\qquad (0\leq j\leq m),
\end{align*}
where the Sato--Tate angles $\theta_p$ are independent with density
$(2/\pi)\sin^2\theta$ on $[0,\pi].$  The representation $V$ is irreducible,
of symplectic type, and of dimension $m+1,$ and hence
\eqref{eq:conditions} holds.  When $m=1,$ this gives the usual Sato--Tate
RMF, for which Theorem~\ref{thm:main} recovers
the bounds of Xu--Zheng
\cite[Theorem~1.8]{xu2026typicalsizeheckeeigenvalue}, with the upper bound
also obtained in \cite[Theorem~1.1]{hamdan2026lowmomentsheckeeigenvalue}.
\end{example}

The special unitary groups $SU(d)$ give models associated with harmonic averages
of Hecke--Maass cusp forms.

\begin{example}[Higher-rank Sato--Tate RMF]\label{ex:higher-rank-st}
Let $G=SU(d)$ for $d\geq3,$ let $V=\C^d$ be its standard representation,
and, at every prime, let $\boldsymbol\alpha(p)$ be the unordered spectrum of
an independent Haar-distributed matrix in $SU(d)$.  The representation $V$
is irreducible, of unitary type, and of dimension $d,$ and hence
\eqref{eq:conditions} holds.  Theorem~\ref{thm:main} therefore applies to
this model for every $d\geq3.$
\end{example}

Jana \cite[Theorem~2]{Jana} proved weighted fixed-prime equidistribution
towards this higher-rank Sato--Tate law in the analytic-conductor aspect, using the Kuznetsov trace formula and the identification of prime-power Fourier coefficients
with Schur characters (see \cite[Theorem~1, Lemma~4.1 and Section~4.1]{Jana}).
Notice that his average includes the continuous spectrum, while a cuspidal variant after
fixing a supercuspidal component is explained in
\cite[Remark~1 and Theorem~7]{Jana}.  Applying the same character argument at
finitely many primes gives the product law, which motivates the
independence in Definition~\ref{def:armf}.  For the corresponding result in
the Laplace-eigenvalue aspect, we refer to \cite[Theorem~1.2]{Zhou}, whose
formulation assumes harmonic orthogonality for $d\geq4.$

The centering of $h_2$ is not automatic, as the even symmetric-power
representations of $SU(2)$ are of orthogonal type and do not satisfy the last
identity in \eqref{eq:conditions}.  Section~\ref{sec:examples} gives an exact
calculation for the symmetric-square model.\\

\noindent\textit{Notation.}
Throughout the paper, we use the standard big $O$ and little $o$
notations, the Vinogradov notation $\ll,\gg,$ and the Hardy notation $\asymp,$ where the
implied constants depend only on the subscripted parameters unless otherwise
specified. Let $P^+(n)$ denote the largest prime factor of $n,$ with
$P^+(1)=1.$  We write
\begin{align*}
 S(x)&:=\sum_{n\leq x}\X(n) \qquad(x\geq0),\\
 T_y(u)&:=\sum_{\substack{n\leq u\\P^+(n)\leq y}}\X(n)
       \qquad(u\geq0,\ y\geq2),\\
 F_y(s)&:=\prod_{p\leq y}\prod_{j=1}^d
              (1-\alpha_j(p)p^{-s})^{-1}
              \qquad(\Re(s)>0,\ y\geq2).
\end{align*}
We also denote $F_{(z,y]}(s):=F_y(s)/F_z(s)$ for $2\leq z\leq y,$ and
$D_q(y):=1+(1-q)\sqrt{\log\log y}$ for $y\geq3.$

\section{Preparation}

To prove Theorem~\ref{thm:main}, we require several preparatory lemmas.  First,
since the higher prime-power coefficients are not necessarily orthogonal, we use the
following covariance expansion to control the resulting error terms.

\begin{lemma}\label{lem:covariance}
Let $m,n\in\N.$  There exists a multiplicative function $g:\N^2\to\C,$ in
the sense that
\begin{align*}
 g(u_1u_2,v_1v_2)=g(u_1,v_1)g(u_2,v_2)
\end{align*}
whenever $(u_1v_1,u_2v_2)=1,$ and it satisfies
\begin{equation}\label{eq:covariance}
 \E[\X(m)\overline{\X(n)}]
 =\sum_{r\mid(m,n)}g(m/r,n/r).
\end{equation}
Also, for every $\sigma>1/3,$ we have
\begin{equation}\label{eq:kernel-norm}
 \sum_{u,v\geq1}\frac{|g(u,v)|}{(uv)^\sigma}\ll_{d,\sigma}1.
\end{equation}
Consequently, for every $0\leq a<b,$ we have
\begin{align}
 \E\left[\left|\sum_{a<n\leq b}\X(n)\right|^2\right]
 \ll_d b-a+b^{5/6}. \label{eq:short-second}
\end{align}
The same estimate holds when the summand is restricted to integers whose
prime factors belong to a fixed set of primes.
\end{lemma}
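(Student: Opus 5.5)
Since the $\boldsymbol\alpha(p)$ are independent across primes and $\X$ is multiplicative, $\E[\X(m)\overline{\X(n)}]$ factors over primes as $\prod_p c_p(v_p(m),v_p(n))$, where $c(a,b):=\E[h_a(\boldsymbol\alpha)\overline{h_b(\boldsymbol\alpha)}]$ does not depend on $p$. I will define $g$ prime by prime through a local Möbius inversion along the diagonal:
\begin{align*}
 g(p^a,p^b):=c(a,b)-c(a-1,b-1),
\end{align*}
with the convention that $c(a-1,b-1)=0$ if $\min(a,b)=0$. I then extend $g$ multiplicatively to $\N^2$, i.e.\ $g(u,v):=\prod_p g(p^{v_p(u)},p^{v_p(v)})$. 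Telescoping gives $\sum_{j=0}^{\min(a,b)}g(p^{a-j},p^{b-j})=c(a,b)$ at each prime, and multiplying over primes yields \eqref{eq:covariance}. The multiplicativity property in the statement holds by construction.

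For the norm bound \eqref{eq:kernel-norm}, the sum factors as an Euler product $\prod_p\bigl(1+\sum_{(a,b)\neq(0,0)}|g(p^a,p^b)|p^{-(a+b)\sigma}\bigr)$. Since $|h_k(\boldsymbol\alpha)|\leq\binom{k+d-1}{d-1}$, every $|g(p^a,p^b)|$ is $O_d((a+b+1)^{2d})$, so the total contribution of $a+b\geq3$ is $\ll_{d,\sigma}\sum_p p^{-3\sigma}$. This converges exactly because $\sigma>1/3$. It then suffices to show that all terms with $a+b\in\{1,2\}$ vanish:
\begin{itemize}
\item $g(p,1)=c(1,0)=\E h_1=0$.
\item $g(p^2,1)=\E h_2=0$, and $g(1,p^2)=\overline{\E h_2}=0$.
\item $g(p,p)=c(1,1)-c(0,0)=\E|h_1|^2-1=0$.
\end{itemize}
This is precisely where \eqref{eq:conditions} is used. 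The local factor is therefore $1+O_d(p^{-3\sigma})$ and the product converges.

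For \eqref{eq:short-second}, I expand the square and use \eqref{eq:covariance}:
\begin{align*}
 \E\Bigl|\sum_{a<n\leq b}\X(n)\Bigr|^2=\sum_{u,v}g(u,v)\,\#\{r:\ a<ru\leq b,\ a<rv\leq b\}.
\end{align*}
The count is at most $\min\bigl((b-a)/\max(u,v)+1,\ b/\max(u,v)\bigr)$.
\begin{itemize}
\item \emph{Main term.} The part $(b-a)\sum|g(u,v)|/\max(u,v)\leq(b-a)\sum|g|(uv)^{-1/2}$ is $\ll_d b-a$ by \eqref{eq:kernel-norm} with $\sigma=1/2$.
\item \emph{Error term.} Here I bound the count by $\min(1,b/\max(u,v))\leq(b/\max(u,v))^{\theta}\leq b^\theta(uv)^{-\theta/2}$. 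Choosing $\theta=5/6$ gives exponent $5/12>1/3$, so \eqref{eq:kernel-norm} again gives a bound $\ll_d b^{5/6}$.
\end{itemize}

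If the sum is restricted to integers whose prime factors lie in a fixed set $\mathcal P$, the same argument applies with $g$ replaced by $g\cdot\mathbf 1_{uv\text{ has all prime factors in }\mathcal P}$. Indeed, $m$ and $n$ are both $\mathcal P$-smooth if and only if $r$, $u$ and $v$ all are, so the same expansion holds and the bounds only decrease.

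The main obstacle I expect is the cancellation at $a+b\leq2$ in the norm bound, which must be checked carefully via the local definition of $g$; the rest is routine bookkeeping with the trivial bound on $h_k$.
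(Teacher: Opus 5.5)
Your proposal is correct and follows the paper's proof essentially step for step. The matching steps are the diagonal telescoping definition $g(p^a,p^b)=c(a,b)-c(a-1,b-1)$, the vanishing of the local terms with $1\leq a+b\leq 2$ (you should also list $g(1,p)=\overline{\E h_1}=0$, which is immediate), the same split of the count of $r$ into a $(b-a)/\max\{u,v\}$ part handled with $\sigma=1/2$ and a bounded part handled with $\sigma=5/12$, and the same absolute-majorant argument for the restriction to a set of primes.
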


\begin{proof}
For integers $k,l \geq 0,$ put
\begin{align*}
 c(k,l):=\E[h_k(\boldsymbol\alpha)
                 \overline{h_l(\boldsymbol\alpha)}],
\end{align*}
where $c(k,l)$ is understood to be zero if either index is negative, and
define
\begin{align*}
 g(p^k,p^l):=c(k,l)-c(k-1,l-1).
\end{align*}
Extending this definition multiplicatively and telescoping at each prime
proves \eqref{eq:covariance}.

The assumptions in \eqref{eq:conditions} give $g(1,1)=1$ and
\begin{align*}
 g(p^k,p^l)=0\qquad\text{whenever }1\leq k+l\leq2.
\end{align*}
Using the bound $|h_k(\boldsymbol\alpha)|\leq\binom{k+d-1}{d-1},$ we obtain
\begin{align*}
 \sum_{k,l\geq0}|g(p^k,p^l)|p^{-\sigma(k+l)}
 =1+O_{d,\sigma}(p^{-3\sigma}).
\end{align*}
The corresponding Euler product converges when $\sigma>1/3,$ which proves
\eqref{eq:kernel-norm} with constants uniform in the local law.

On substituting $m=ur$ and $n=vr$ in the second moment of an interval sum,
the number of admissible values of $r$ is at most
\begin{align*}
 \frac{b-a}{\max\{u,v\}}+1.
\end{align*}
The first term contributes $O_d(b-a)$ by \eqref{eq:kernel-norm} with
$\sigma=1/2.$  Since $u,v\leq b$ for the second term, we have
\begin{align*}
 \sum_{u,v\leq b}|g(u,v)|
 \leq b^{5/6}\sum_{u,v\geq1}\frac{|g(u,v)|}{(uv)^{5/12}}
 \ll_d b^{5/6},
\end{align*}
which proves \eqref{eq:short-second}. Restricting
to a set of primes removes terms from the same absolute majorants, which
proves the final assertion.
\end{proof}

We pass between smooth sums and Euler products by means of the following
Mellin form of Parseval's identity.

\begin{lemma}\label{lem:mellin}
Let $\eta\geq0.$ Then
\begin{align*}
 \int_1^\infty |T_y(u)|^2\frac{du}{u^{2+2\eta}}
 =\frac1{2\pi}\int_{\R}
   \left|\frac{F_y(1/2+\eta+it)}{1/2+\eta+it}\right|^2dt.
\end{align*}
\end{lemma}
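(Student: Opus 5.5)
This is Plancherel for the Mellin transform applied to a step function, for each fixed sample of the $\alpha_j(p)$. Set $\sigma:=1/2+\eta$. For fixed $y$, $F_y(s)$ is a finite product of geometric-type Euler factors and hence equals $\sum_{P^+(n)\le y}\X(n)n^{-s}$. This series converges absolutely for $\Re(s)>0$, because $|\X(p^k)|\le\binom{k+d-1}{d-1}$ grows only polynomially in $k$ and only finitely many primes are involved. In particular $\sum_{P^+(n)\le y}|\X(n)|n^{-\epsilon}<\infty$ for every $\epsilon>0$. Partial summation then gives $T_y(u)\ll_{\epsilon,y,\boldsymbol\alpha}u^{\epsilon}$, so $u\mapsto T_y(u)u^{-\sigma-1/2}$ is square-integrable on $[1,\infty)$ whenever $\sigma>0$, and in particular for every $\eta\ge0$.

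Next compute the Mellin transform of $T_y$. For $\Re(s)>0$, Abel summation (or interchanging sum and integral, which is justified by absolute convergence) gives
\begin{align*}
 s\int_1^\infty T_y(u)u^{-s-1}\,du=\sum_{P^+(n)\le y}\X(n)n^{-s}=F_y(s).
\end{align*}
The lower limit is $1$ because $T_y(u)=0$ for $u<1$. Substituting $u=e^v$ and writing $s=\sigma+it$, this says that the Fourier transform of
\begin{align*}
 \phi(v):=T_y(e^v)e^{-\sigma v}\mathbf 1_{v\ge0}
\end{align*}
at frequency $t$ equals $F_y(\sigma+it)/(\sigma+it)$. The function $\phi$ lies in $L^1\cap L^2$: for $\sigma>0$ we can take $\epsilon<\sigma$ in the growth bound, so $\phi$ decays exponentially.

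Plancherel's theorem now gives
\begin{align*}
 \int_0^\infty|\phi(v)|^2\,dv=\frac1{2\pi}\int_{\R}\left|\frac{F_y(\sigma+it)}{\sigma+it}\right|^2dt.
\end{align*}
Changing variables back via $u=e^v$, $dv=du/u$, turns the left-hand side into $\int_1^\infty|T_y(u)|^2u^{-2\sigma-1}\,du=\int_1^\infty|T_y(u)|^2u^{-2-2\eta}\,du$, which is the claimed identity.

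The only potentially delicate point is the endpoint case $\eta=0$, where one has to be sure there is no convergence issue on the line $\Re(s)=1/2$. There is none: $y$ is finite, so the Dirichlet series converges absolutely on all of $\Re(s)>0$ and $\phi$ still decays exponentially. The identity therefore holds pathwise for every sample, with both sides finite.
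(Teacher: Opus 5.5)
Your proof is correct and is essentially the paper's argument: the paper simply cites Plancherel's identity for absolutely convergent Dirichlet series (Montgomery--Vaughan, Theorem~5.4), applied pathwise to $F_y(s)$ in $\Re(s)>0$. You have written out the proof of that cited theorem, namely the Mellin formula $F_y(s)/s=\int_1^\infty T_y(u)u^{-s-1}\,du$ followed by Plancherel for the $L^1\cap L^2$ function $T_y(e^v)e^{-\sigma v}\mathbf 1_{v\geq0}$.
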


\begin{proof}
Applying \cite[Theorem~5.4]{MR2378655} to the absolutely convergent
Dirichlet series for $F_y(s)$ in $\Re(s)>0$ gives the identity.
\end{proof}

We next compare the Euler product with a Gaussian field prime by prime, using
a replacement argument whose covariance comparison will be applied in both
directions.  Put $\rho:=\E[h_1(\boldsymbol\alpha)^2].$
Notice that the Cauchy--Schwarz inequality and \eqref{eq:conditions} give
$|\rho|\leq1.$

Throughout this section, for a finite set of primes $\mathcal P,$ let
\begin{align*}
 F_{\mathcal P}(s)&:=\prod_{p\in\mathcal P}\prod_{j=1}^d
                     (1-\alpha_j(p)p^{-s})^{-1},\\
 H_{\mathcal P,\rho,\sigma}(t)&:=2\Re\sum_{p\in\mathcal P}
                    g_pp^{-\sigma-it},\\
 m_{\mathcal P,\sigma}&:=
        \exp\left(\sum_{p\in\mathcal P}p^{-2\sigma}\right),
\end{align*}
where the $g_p$ are independent centered complex Gaussian random variables satisfying
\begin{align*}
 \E[|g_p|^2]=1,\qquad \E[g_p^2]=\rho.
\end{align*}
They exist for every $|\rho|\leq1,$ where we permit a degenerate
real covariance matrix when $|\rho|=1.$

\begin{lemma}\label{lem:gaussian-replacement}
There exists a constant $c_d\geq1$ such that for every finite positive Borel
measure $\nu$ on $\R,$ every $\sigma\geq1/2$ and every $0<q\leq1,$ the quantities
\begin{align*}
 \mathcal F_{\mathcal P}&:=\E\left[\left(\int_{\R}
          |F_{\mathcal P}(\sigma+it)|^2\,d\nu(t)\right)^q \right],\\
 \mathcal G_{\mathcal P}&:=m_{\mathcal P,\sigma}^q \,
 \E\left[\left(\int_{\R}
 e^{H_{\mathcal P,\rho,\sigma}(t)
       -\frac{1}{2}\E[H_{\mathcal P,\rho,\sigma}(t)^2]}\,d\nu(t)\right)^q \right]
\end{align*}
satisfy
\begin{align*}
 c_d^{-q} \,\mathcal G_{\mathcal P}\leq\mathcal F_{\mathcal P}
 \leq c_d^q \,\mathcal G_{\mathcal P}.
\end{align*}
\end{lemma}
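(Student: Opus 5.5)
The plan is a prime-by-prime Lindeberg replacement. Primes $p\leq P_0(d)$ are handled by crude bounds, and primes $p>P_0(d)$ are swapped one at a time, each swap costing a multiplicative factor $1+O_d(p^{-3\sigma})$. Since $\sigma\geq1/2$, these factors have a convergent product $\prod_p(1+Cp^{-3/2})$, bounded in terms of $d$ alone. It gives a constant of the form $c_d^q$ because every per-prime error carries a factor $q$, or at worst $q(1-q)$, from differentiating $x\mapsto x^q$.

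\textbf{Setup.} By independence across primes, I will write
\[
 X_p(t):=|F_{\{p\}}(\sigma+it)|^2, \qquad
 Y_p(t):=\exp\Bigl(p^{-2\sigma}+H_{\{p\},\rho,\sigma}(t)-\tfrac12\E[H_{\{p\},\rho,\sigma}(t)^2]\Bigr).
\]
Then $\mathcal F_{\mathcal P}$ and $\mathcal G_{\mathcal P}$ are $\E[(\int\prod_{p}Z_p(t)\,d\nu(t))^q]$ with every $Z_p=X_p$, respectively every $Z_p=Y_p$.

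\textbf{Small primes.} Fix a prime $p$ and condition on all other factors, which gives a random finite measure $d\mu=\prod_{p'\neq p}Z_{p'}\,d\nu$. For $X_p$ I use the pointwise bounds $(1+p^{-1/2})^{-2d}\leq X_p\leq(1-p^{-1/2})^{-2d}$, which are constants depending only on $d$. Thus replacing $X_p$ by $1$ changes the expectation by a factor $C_d^{\pm q}$.

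For $Y_p$, which has $\E[Y_p(t)]=e^{p^{-2\sigma}}$, two bounds are available.
\begin{itemize}
\item Upper bound: Jensen's inequality for the concave map $x\mapsto x^q$, applied conditionally, gives $\E[(\int Y_p\,d\mu)^q]\leq e^{q p^{-2\sigma}}(\int d\mu)^q$.
\item Lower bound: restrict to the event $\{|g_p|\leq1\}$, which is independent of $\mu$. Its probability is bounded below uniformly in $|\rho|\leq1$, for instance since $\Re g_p,\Im g_p$ each have variance at most $1$. On this event $Y_p\geq e^{-4}$ pointwise, so $\E[(\int Y_p\,d\mu)^q]\geq c\,e^{-4q}(\int d\mu)^q$ with $c$ absolute.
\end{itemize}
This lower bound alone is not of the form $c_d^{-q}$ as $q\to0$. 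To repair it, I will first reduce to $q\geq q_0(d)$, or instead note that replacing $Y_p$ by $1$ can be done in reverse via the same Taylor argument as below. I expect the resulting constant to be absorbed, but this uniformity in small $q$ needs care.

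\textbf{Large primes.} For $p>P_0(d)$ I expand
\[
 X_p(t)=1+L_p(t)+Q_p(t)+O_d(p^{-3\sigma}),\qquad L_p(t):=2\Re\bigl(h_1(\boldsymbol\alpha(p))p^{-\sigma-it}\bigr),
\]
using $\log F_{\{p\}}=\sum_k p_k(\boldsymbol\alpha)p^{-k s}/k$, where $p_1=h_1$ and $p_2=2h_2-h_1^2$. Here $Q_p$ is quadratic, of size $O_d(p^{-2\sigma})$. By \eqref{eq:conditions}, $\E[L_p]=0$ and $\E[X_p(t)]=1+p^{-2\sigma}+O(p^{-3\sigma})$. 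The covariances $\E[L_p(t)L_p(t')]$ coincide with those of $H_{\{p\},\rho,\sigma}$, since both are determined by $\E|h_1|^2=1=\E|g_p|^2$ and $\E h_1^2=\rho=\E g_p^2$.

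For $Y_p$ the analogous expansion has the same first and second moments up to $O(p^{-3\sigma})$. Its remainder is not bounded, but it has Gaussian tails at scale $p^{-\sigma}$.

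I then write $\varphi(x)=x^q$, $A=\int d\mu$ and $B_Z=\int(Z_p-1)\,d\mu$, and Taylor expand $\varphi(A+B_Z)$ to second order around $A$. The first- and second-order terms are matched up to
\[
 O\bigl(q\,A^{q}\,p^{-3\sigma}\bigr),
\]
using $|\varphi'(A)|\leq qA^{q-1}$, $|\varphi''(A)|\leq q(1-q)A^{q-2}$ and $|B_Z|\leq A\sup_t|Z_p(t)-1|$. The third-order remainder is controlled by $\E[\sup_t|Z_p-1|^3]\ll_d p^{-3\sigma}$. For the Gaussian, the region where $Z_p-1$ is large has superexponentially small probability, and I will treat it with a cruder bound there using $(A+B)^q\leq A^q+|B|^q$.

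This gives
\[
 \E_p\bigl[(A+B_X)^q\bigr]=\E_p\bigl[(A+B_Y)^q\bigr]+O_d\bigl(q\,p^{-3\sigma}\bigr)A^q.
\]
Since $\E_p[(A+B_Y)^q]\geq c\,A^q$ by the small-primes argument, this is a multiplicative comparison $1+O_d(q\,p^{-3\sigma})$. Iterating over the primes in $\mathcal P$ and using $\prod_p(1+C q p^{-3/2})\leq e^{C'q}$ yields the claim.

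\textbf{Main obstacle.} The main difficulty is making the Taylor step genuinely multiplicative and uniform, with an error $O(q\,p^{-3\sigma})A^q$ and no additive loss. This is delicate for the unbounded Gaussian factor and requires uniformity in $q\to0$ and in $|\rho|\to1$, where the covariance is degenerate.
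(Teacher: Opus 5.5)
Your architecture matches the paper's: condition on the other primes and absorb them into the measure, swap one prime at a time, match first and second moments via \eqref{eq:conditions}, pay $O_d(qp^{-3\sigma})$ per large prime, and treat $p\leq P_0(d)$ crudely. The gap is the one you flag, uniformity as $q\to0$. As written, your argument only gives $C_d^{-1}\mathcal G_{\mathcal P}\leq\mathcal F_{\mathcal P}\leq C_d\mathcal G_{\mathcal P}$, which is the lemma only for $q$ bounded below. The problem enters in two places.

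\textbf{Small primes.} For $p\leq P_0(d)$, restricting to $\{|g_p|\leq1\}$ costs the probability of that event. That factor does not tend to $1$ as $q\to0$. Neither proposed repair works. There is no free reduction to $q\geq q_0$: convexity of $q\mapsto\log\E[M^q]$ bounds $\mathcal F_{\mathcal P}$ from above for small $q$. The matching lower bound for $\mathcal G_{\mathcal P}$, however, needs a comparison of $\E[\log M]$ for the two masses, which is exactly the missing small-$q$ information. A Taylor argument in $p^{-\sigma}$ also fails, since there is no small parameter at small primes.

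\textbf{Large primes.} Expanding $x\mapsto x^q$ about $A$ degenerates where $A+B_Y$ is close to $0$, because $\varphi'''$ blows up there. On that event your crude bound $(A+B)^q\leq A^q+|B|^q$ carries no factor $q$, and neither does the trivial bound $A^q-(A+B_Y)^q\leq A^q$. You therefore get per-prime factors $1+O(e^{-cp^{2\sigma}})$, whose product is bounded but is not $e^{O(q)}$.

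\textbf{The missing ingredient} is the pointwise lower bound
\[
 \log Y_p(t)=2p^{-\sigma}\Re(g_pp^{-it})-p^{-2\sigma}\Re(\rho p^{-2it})\geq-2p^{-\sigma}|g_p|-p^{-2\sigma},
\]
so that $\int Y_p\,d\mu\geq A\,e^{-2p^{-\sigma}|g_p|-p^{-2\sigma}}$.

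For the small primes, combine this with $\E[e^{-qX}]\geq e^{-q\E[X]}$ and $\E|g_p|\leq1$. This gives $\E[(\int Y_p\,d\mu)^q]\geq e^{-2q}A^q$ at every prime. The paper gets $e^{-qp^{-2\sigma}}A^q$ by applying Jensen twice, once to $\log$ against $\mu/A$ and once to $\exp$.

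For the large-prime step, put $\ell:=\log(1+B_Y/A)$. The same bound, with the trivial upper bound on $Y_p$, gives $|\ell|\ll1+|g_p|$. Then $|(A+B_Y)^q-A^q|\leq q\,|\ell|\max\{1,e^{q\ell}\}A^q$, which restores the factor $q$ on the bad event.

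The paper avoids this second issue differently, by Taylor expanding in $u=p^{-\sigma}$ rather than in $B$. Let $\mathcal G_p(u)$ be $\int Y_p\,d\mu$ with $p^{-\sigma}$ replaced by $u$. Since the integrand is positive, $|\mathcal G_p^{(k)}(u)|\ll(1+|g_p|)^k\mathcal G_p(u)$. Hence each term of $(\mathcal G_p^q)'''$ is $\ll q(1+|g_p|)^3\mathcal G_p(u)^q$, and there is no singularity as $\mathcal G_p\to0$.
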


\begin{proof}
We may assume that $\nu(\R)>0$ and replace one prime at a time, conditioning
on all the remaining local random variables.  After absorbing their positive
local factors into $\nu,$ put $\mu:=\nu/\nu(\R),$ since the normalization
contributes the same factor $\nu(\R)^q$ to both sides of the comparison.  Put
\begin{align*}
 \mathcal I_p:=\int_{\R}p^{-it}\,d\mu(t).
\end{align*}
The two local quantities to be compared are
\begin{align*}
 \mathcal F_p&:=\int_{\R}
 \left|\prod_{j=1}^d(1-\alpha_jp^{-\sigma-it})^{-1}\right|^2d\mu(t),\\
 \mathcal G_p&:=\int_{\R}
 \exp\left(2p^{-\sigma}\Re(g_pp^{-it})
       -p^{-2\sigma}\Re(\rho p^{-2it})\right)d\mu(t).
\end{align*}
Uniformly in $\mu,$ expansion of the local Euler factor gives
\begin{align*}
 \mathcal F_p
=1&+2p^{-\sigma}
       \Re\left(h_1(\boldsymbol\alpha)\mathcal I_p\right)\\
 &+p^{-2\sigma}\left\{|h_1(\boldsymbol\alpha)|^2
       +2\Re\left(h_2(\boldsymbol\alpha)
       \int_{\R}p^{-2it}\,d\mu(t)\right)\right\}
       +O_d(p^{-3\sigma}).
\end{align*}
Applying Taylor's theorem and the identities in \eqref{eq:conditions}, for sufficiently large $p,$ we obtain
\begin{align*}
 \E[\mathcal F_p^q]
={}&1+qp^{-2\sigma}
 +2q(q-1)p^{-2\sigma}
 \E[\left(\Re\left(
 h_1(\boldsymbol\alpha)\mathcal I_p\right)\right)^2]
 +O_d(qp^{-3\sigma}).
\end{align*}
The remainder here is uniform since
\begin{align*}
 |h_k(\boldsymbol\alpha)|
 \leq\binom{k+d-1}{d-1}.
\end{align*}

We now perform the corresponding calculation for $\mathcal G_p.$  Expanding
the exponential and integrating against $\mu,$ we have
\begin{align*}
 \mathcal G_p
=1+2p^{-\sigma}\Re(g_p\mathcal I_p)
 &+p^{-2\sigma}\int_{\R}
 \left\{2(\Re(g_pp^{-it}))^2
       -\Re(\rho p^{-2it})\right\}d\mu(t) \\
 &+O\bigl(p^{-3\sigma}(1+|g_p|)^3
       \exp(p^{-\sigma}(2|g_p|+1))\bigr).
\end{align*}
The Gaussian moment identities give
\begin{gather*}
 2\int_{\R}\E[(\Re(g_pp^{-it}))^2]\,d\mu(t)
 -\Re\left(\rho\int_{\R}p^{-2it}\,d\mu(t)\right)=1,\\
 \E[(\Re(g_p\mathcal I_p))^2]
 =\E[\left(\Re\left(
 h_1(\boldsymbol\alpha)\mathcal I_p\right)\right)^2].
\end{gather*}
Applying Taylor's theorem again and using the Gaussian exponential
moments, we obtain
\begin{align*}
 \E[\mathcal G_p^q]
={}&1+qp^{-2\sigma}
 +2q(q-1)p^{-2\sigma}
 \E[\left(\Re\left(
 h_1(\boldsymbol\alpha)\mathcal I_p\right)\right)^2]
 +O_d(qp^{-3\sigma})\\
={}&\E[\mathcal F_p^q]+O_d(qp^{-3\sigma}).
\end{align*}
For the remainder, let $\mathcal G_p(u)$ denote the defining integral
with $p^{-\sigma}$ replaced by $u.$  For $0\leq u\leq1/2$ and $1\leq k\leq3,$ differentiating the integrand gives
\begin{gather*}
 |\mathcal G_p^{(k)}(u)|\ll(1+|g_p|)^k\mathcal G_p(u),
 \qquad
 \mathcal G_p(u)^q\leq\exp(|g_p|+1),\\
 (\mathcal G_p^q)'''
 =q\mathcal G_p^{q-1}\mathcal G_p'''
 +3q(q-1)\mathcal G_p^{q-2}\mathcal G_p'\mathcal G_p''
 +q(q-1)(q-2)\mathcal G_p^{q-3}(\mathcal G_p')^3.
\end{gather*}
Consequently, we have
\begin{gather*}
 \E\left[\sup_{0\leq u\leq1/2}
 |(\mathcal G_p(u)^q)'''|\right]
 \ll q\E[(1+|g_p|)^3\exp(|g_p|+1)]\ll q,
\end{gather*}
which gives the stated remainder uniformly for $|\rho|\leq1.$  Since the common main term in the two expansions
is $1+O_d(qp^{-2\sigma}),$ it follows that, for every sufficiently large $p,$
we have
\begin{align}\label{eq:local-ratio}
 \exp(-c_dqp^{-3\sigma})
 \leq\frac{\E[\mathcal F_p^q]}
              {\E[\mathcal G_p^q]}
 \leq \exp(c_dqp^{-3\sigma}).
\end{align}
For the finitely many remaining primes, we have
\begin{align*}
 (1+p^{-\sigma})^{-2d}\leq\mathcal F_p
 \leq(1-p^{-\sigma})^{-2d},
\end{align*}
and
\begin{align*}
 \exp(-qp^{-2\sigma})\leq\E[\mathcal G_p^q]
 \leq \exp(qp^{-2\sigma}),
\end{align*}
where the second estimate follows from Jensen's inequality and the Gaussian exponential formula. After multiplying over these primes, their contribution
lies between $c_d^{-q}$ and $c_d^q.$

Finally, it follows from the identity
\begin{align*}
 \frac12 \, \E[H_{\mathcal P,\rho,\sigma}(t)^2]
 =\sum_{p\in\mathcal P}p^{-2\sigma}
  +\Re\left(\rho\sum_{p\in\mathcal P}p^{-2\sigma-2it}\right)
\end{align*}
that
\begin{align*}
 m_{\mathcal P,\sigma}\int_{\R}
 e^{H_{\mathcal P,\rho,\sigma}(t)
       -\frac12\E[H_{\mathcal P,\rho,\sigma}(t)^2]}
 \,d\nu(t)
=\int_{\R}\prod_{p\in\mathcal P}
 e^{2p^{-\sigma}\Re(g_pp^{-it})
       -p^{-2\sigma}\Re(\rho p^{-2it})}\,d\nu(t).
\end{align*}
Iterating \eqref{eq:local-ratio}, we obtain
\begin{align*}
 c_d^{-q}\exp\left(-c_dq\sum_{p\in\mathcal P}p^{-3\sigma}\right)
 \leq\frac{\mathcal F_{\mathcal P}}{\mathcal G_{\mathcal P}}
 \leq c_d^q\exp\left(c_dq\sum_{p\in\mathcal P}p^{-3\sigma}\right).
\end{align*}
Since $\sigma\geq1/2$ and $\sum_pp^{-3/2}<\infty,$ the result follows after
enlarging $c_d.$
\end{proof}

The covariance kernels in our applications agree only up to a bounded
function.  The following consequence of Kahane's comparison
inequality permits such a perturbation.

\begin{lemma} \label{lem:kahane}
Let $G_1,G_2$ be centered continuous Gaussian fields on a compact
interval $I,$ and suppose that
\begin{align*}
 \left|\E[G_1(t)G_1(u)]
       -\E[G_2(t)G_2(u)]\right|\leq c
\end{align*}
for every $t,u\in I.$  Given a finite positive measure $\nu$ on $I,$ put
\begin{align*}
 \mathcal M_j:=\int_I
 e^{G_j(u)-\frac{1}{2}\E[G_j(u)^2]}\,d\nu(u)
 \qquad(j=1,2).
\end{align*}
Then for $0<q\leq1,$ we have
\begin{align*}
 e^{-cq(1-q)/2}\E[\mathcal M_2^q]
 \leq\E[\mathcal M_1^q]
 \leq e^{cq(1-q)/2}\E[\mathcal M_2^q].
\end{align*}
\end{lemma}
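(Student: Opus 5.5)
We prove the comparison by Kahane's interpolation. The natural approach would be to interpolate between $G_1$ and $G_2$ and differentiate. But the covariance difference $K_1-K_2$ (with $K_j(t,u):=\E[G_j(t)G_j(u)]$) has no sign, so convexity alone gives nothing. We therefore add a common constant shift that makes the difference of a fixed sign. Let $Z$ be a standard real Gaussian independent of $G_1,G_2$, and put $\tilde G_2:=G_2+\sqrt{c}\,Z$. Then $\E[\tilde G_2(t)\tilde G_2(u)]=K_2(t,u)+c\geq K_1(t,u)$ pointwise. The associated chaos is
\begin{align*}
\tilde{\mathcal M}_2=e^{\sqrt c Z-c/2}\,\mathcal M_2,
\end{align*}
and independence gives
\begin{align*}
\E[\tilde{\mathcal M}_2^q]=\E[e^{q\sqrt cZ-qc/2}]\,\E[\mathcal M_2^q]=e^{q^2c/2-qc/2}\,\E[\mathcal M_2^q]=e^{-cq(1-q)/2}\,\E[\mathcal M_2^q].
\end{align*}

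Next we apply Kahane's convexity inequality to the pair $(G_1,\tilde G_2)$ with the concave function $\phi(x)=x^q$ on $[0,\infty)$. Kahane's inequality says that if $K_A\leq K_B$ pointwise then $\E[\phi(\mathcal M_A)]\geq\E[\phi(\mathcal M_B)]$ for concave $\phi$ of at most polynomial growth. This is proved by the interpolation $G_s=\sqrt{s}\,A+\sqrt{1-s}\,B$ with $A,B$ independent. Gaussian integration by parts gives that $\frac{d}{ds}\E[\phi(\mathcal M_s)]$ equals
\begin{align*}
\tfrac12\iint (K_A-K_B)(t,u)\,\E\bigl[\phi''(\mathcal M_s)\,e^{G_s(t)-\frac12\E[G_s(t)^2]}\,e^{G_s(u)-\frac12\E[G_s(u)^2]}\bigr]\,d\nu(t)\,d\nu(u).
\end{align*}
With $K_A=K_1\leq K_B=K_2+c$ and $\phi''\leq0$, this derivative is $\geq0$, so
\begin{align*}
\E[\mathcal M_1^q]\geq\E[\tilde{\mathcal M}_2^q]=e^{-cq(1-q)/2}\,\E[\mathcal M_2^q],
\end{align*}
which is the lower bound. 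Exchanging the roles of $G_1$ and $G_2$, we shift $G_1$ by $\sqrt c Z$ and get $\E[\mathcal M_2^q]\geq e^{-cq(1-q)/2}\E[\mathcal M_1^q]$, which is the upper bound.

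The main obstacle is justifying Kahane's inequality for $x^q$, since $\phi''$ blows up at $0$. We plan to work with $\phi_\epsilon(x)=(x+\epsilon)^q$, which is smooth and concave on $[0,\infty)$ with bounded derivatives of all orders. We also first approximate $\nu$ by finite atomic measures, so the fields become finite Gaussian vectors and the integration by parts above is an elementary finite-dimensional computation. The interchange of derivatives and expectations is justified by the Gaussian exponential moments. Passing to the limit uses two facts. First, $\mathcal M_j$ has finite first moment and is continuous under weak approximation of $\nu$, since the fields are continuous on a compact $I$; this gives uniform integrability for the $q$-th powers. Second, dominated convergence as $\epsilon\to0$.
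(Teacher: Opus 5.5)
Your proposal is correct and is essentially the paper's proof. You shift $G_2$ by an independent $\sqrt{c}\,Z$ to order the covariances, apply Kahane's inequality to the concave map $x\mapsto x^q$, factor out $\E[e^{q\sqrt{c}Z-qc/2}]=e^{-cq(1-q)/2}$, interchange the two fields, and pass from finite atomic measures to the integral. Your interpolation derivation of Kahane's inequality and the regularization $(x+\epsilon)^q$ only fill in details that the paper leaves to its citation of Kahane's Lemme~1.
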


\begin{proof}
Let $\mathcal{N}$ be an independent standard real Gaussian random variable.  Since
\begin{align*}
 \E[G_1(t)G_1(u)]\leq\E[G_2(t)G_2(u)]+c,
\end{align*}
Kahane's inequality \cite[Lemme~1]{Kahane}, applied to the concave function
$x\mapsto x^q$ and the fields $G_1$ and $G_2+\sqrt c\,\mathcal{N},$ gives
\begin{align*}
 \E[\mathcal M_1^q]
 &\geq\E[e^{q\sqrt c\,\mathcal{N}-qc/2}]\, \E[\mathcal M_2^q]\\
 &=e^{-cq(1-q)/2}\, \E[\mathcal M_2^q].
\end{align*}
Interchanging the two fields proves the other inequality, with the integral
form following by approximation by finite positive sums.
\end{proof}

We also record the prime-sum estimate used to compare the Gaussian
covariances at different spatial scales.

\begin{lemma}\label{lem:prime-sum}
There exists an absolute constant $c>0$ such that uniformly for $2\leq a\leq b$
and $v\in\R,$ we have
\begin{equation}\label{eq:prime-covariance-integral}
 \sum_{e^a<p\leq e^b}p^{-1-iv}
 =\int_a^b e^{-ivw} \,\frac{dw}{w}
 +O\left((1+|v|)(1+\sqrt a)e^{-c\sqrt a}\right).
\end{equation}
If $v\ne0,$ then
\begin{align*}
\int_a^b e^{-ivw} \,\frac{dw}{w}
 \ll (|v|a)^{-1}.
\end{align*}
\end{lemma}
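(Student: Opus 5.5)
The first estimate follows from partial summation against the prime number theorem with classical error term, and the second from a single integration by parts. Write the prime sum as a Stieltjes integral $\int_{e^a}^{e^b} t^{-1-iv}\,d\pi(t)$ and split $\pi(t)=\mathrm{li}(t)+E(t)$. The de la Vallée Poussin bound gives
\begin{align*}
 E(t)\ll t\exp(-c_0\sqrt{\log t}).
\end{align*}

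The main part is $\int_{e^a}^{e^b} t^{-1-iv}\,\frac{dt}{\log t}$. The substitution $t=e^w$ turns it into exactly
\begin{align*}
 \int_a^b e^{-ivw}\,\frac{dw}{w},
\end{align*}
which is the main term in \eqref{eq:prime-covariance-integral}.

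For the error part, integrate by parts:
\begin{align*}
 \int_{e^a}^{e^b} t^{-1-iv}\,dE(t)=\Bigl[E(t)t^{-1-iv}\Bigr]_{e^a}^{e^b}+(1+iv)\int_{e^a}^{e^b}E(t)t^{-2-iv}\,dt.
\end{align*}
The boundary terms are $\ll e^{-c_0\sqrt a}$, since $e^{-c_0\sqrt b}\leq e^{-c_0\sqrt a}$. In the remaining integral, substitute $t=e^w$ to bound it by
\begin{align*}
 (1+|v|)\int_a^\infty e^{-c_0\sqrt w}\,dw.
\end{align*}
With $w=s^2$ this is
\begin{align*}
 2(1+|v|)\int_{\sqrt a}^\infty s\,e^{-c_0 s}\,ds\ll(1+|v|)(1+\sqrt a)e^{-c_0\sqrt a}.
\end{align*}
Taking $c=c_0$ gives the stated error. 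The only care needed is to keep the bound uniform in $b$, which the tail integral to infinity does, and uniform in $v$, which the explicit factor $1+|v|$ tracks. The point of the statement is precisely that the dependence on $v$ is only linear, and I expect no real obstacle here.

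For the second claim, with $v\neq0$, integrate by parts using $e^{-ivw}=\frac{d}{dw}\bigl(e^{-ivw}/(-iv)\bigr)$:
\begin{align*}
 \int_a^b e^{-ivw}\frac{dw}{w}=\Bigl[\frac{e^{-ivw}}{-ivw}\Bigr]_a^b-\int_a^b\frac{e^{-ivw}}{ivw^2}\,dw.
\end{align*}
The boundary terms are at most $\frac{2}{|v|a}$. The integral is at most $\frac{1}{|v|}\int_a^\infty w^{-2}\,dw=\frac{1}{|v|a}$. Altogether this is $\ll(|v|a)^{-1}$, uniformly in $b$.
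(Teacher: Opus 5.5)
Your proof is correct and matches the paper's argument: partial summation against the prime number theorem with the de la Vall\'ee Poussin error term, the substitution $w=s^2$ to bound the tail integral by $(1+\sqrt a)e^{-c\sqrt a}$, and one integration by parts for the oscillatory bound. You give more detail than the paper, namely the change of variables $t=e^w$ that identifies the main term and the explicit boundary-term bookkeeping, but the route is the same.
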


\begin{proof}
Applying the prime number theorem with the de la Vall\'ee Poussin error term followed by partial summation, we obtain
\begin{gather*}
 \left|\sum_{e^a<p\leq e^b}p^{-1-iv}
       -\int_a^b\exp(-ivw)\frac{dw}{w}\right|
 \ll(1+|v|)\left(\exp(-c\sqrt a)
       +\int_a^\infty\exp(-c\sqrt w)\,dw\right)\\
 \ll(1+|v|)(1+\sqrt a)\exp(-c\sqrt a),
\end{gather*}
where the last estimate follows on substituting $w=u^2.$  This proves
\eqref{eq:prime-covariance-integral}, and one integration by parts gives
the second assertion.
\end{proof}

We shall also use the following second-moment estimate for smooth sums, which
follows from the covariance expansion in Lemma~\ref{lem:covariance}.

\begin{lemma}\label{lem:smooth-second}
Uniformly for sufficiently large $y$ and $u\geq1,$ we have
\begin{align*}
 \E[|T_y(u)|^2]
 \ll_d u(\log y)\exp\left(-\frac{\log u}{\log y}\right).
\end{align*}
\end{lemma}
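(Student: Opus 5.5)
We will use Rankin's trick together with the covariance expansion of Lemma~\ref{lem:covariance}. Write
\begin{align*}
 \E[|T_y(u)|^2]=\sum_{\substack{m,n\leq u\\P^+(mn)\leq y}}\E[\X(m)\overline{\X(n)}]
 =\sum_{\substack{r,a,b\\ ra,rb\leq u\\ P^+(rab)\leq y}} g(a,b),
\end{align*}
by \eqref{eq:covariance}. For fixed $a,b$ we bound the number of admissible $r$ by Rankin: with $\varepsilon:=1/\log y$, the $y$-smooth $r\leq u/\max\{a,b\}$ number at most
\begin{align*}
 \sum_{\substack{r\leq U\\P^+(r)\leq y}}1\leq U^{1-\varepsilon}\sum_{P^+(r)\leq y}r^{-1+\varepsilon},
 \qquad U:=\frac{u}{\max\{a,b\}} .
\end{align*}
The Euler product $\prod_{p\leq y}(1-p^{-1+\varepsilon})^{-1}$ is $\ll\log y$, since $p^{\varepsilon}\leq e$ for $p\leq y$ and $\sum_{p\leq y}(p^{\varepsilon}-1)/p\ll\varepsilon\sum_{p\leq y}\log p/p\ll1$. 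Hence the $r$-count is $\ll (\log y)\,u^{1-\varepsilon}\max\{a,b\}^{-(1-\varepsilon)}$, and $u^{-\varepsilon}=\exp(-\log u/\log y)$ yields exactly the claimed shape.

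It remains to sum over $a,b$:
\begin{align*}
 \sum_{a,b}\frac{|g(a,b)|}{\max\{a,b\}^{1-\varepsilon}}\leq\sum_{a,b}\frac{|g(a,b)|}{(ab)^{(1-\varepsilon)/2}}\ll_d 1
\end{align*}
by \eqref{eq:kernel-norm} with $\sigma=(1-\varepsilon)/2>1/3$ for $y$ sufficiently large (uniformly, since $\sigma\geq 0.45$, say, and the bound in \eqref{eq:kernel-norm} can be taken uniform for $\sigma$ in a compact subinterval of $(1/3,\infty)$, which one sees from the Euler product $1+O_d(p^{-3\sigma})$). This gives $\E[|T_y(u)|^2]\ll_d u(\log y)\exp(-\log u/\log y)$.

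One subtlety: when $U<1$ there are no admissible $r$, and the Rankin bound still holds trivially since its right-hand side is nonnegative, so no separate case is needed. The only genuine points to check are that the uniformity in $\sigma$ in \eqref{eq:kernel-norm} is legitimate and that the restriction $P^+(rab)\leq y$ is simply dropped for $a,b$ (allowed, since we bound by absolute values). I expect the $\sigma$-uniformity of \eqref{eq:kernel-norm} near $\sigma=1/2$ to be the main, though mild, point requiring care; a cruder alternative is to use $\sigma=0.45$ throughout and accept $\max\{a,b\}^{-(1-\varepsilon)}\leq (ab)^{-0.45}$ once $\varepsilon\leq0.1$.
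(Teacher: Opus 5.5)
Your proposal is correct and matches the paper's proof essentially step for step. Both use Rankin's bound with exponent shift $1/\log y$ for the $y$-smooth $r\le u/\max\{a,b\}$, drop the smoothness restriction on $a,b$ in the covariance expansion, and finish with \eqref{eq:kernel-norm} at a fixed exponent; the paper takes $\sigma=5/12$, which is the same device as your fallback $\sigma=0.45$ and avoids the $\sigma$-uniformity question you raised.
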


\begin{proof}
Put $\eta=1/\log y.$  Rankin's argument and Mertens' estimate give
\begin{align*}
 \Psi(w,y):=&\,\#\{n\leq w:P^+(n)\leq y\}\\
 \leq &\, w^{1-\eta}\prod_{p\leq y}(1-p^{-1+\eta})^{-1}
 \ll w^{1-\eta}\log y.
\end{align*}
Here the change from $p^{-1}$ to $p^{-1+\eta}$ costs a bounded factor,
since $p^\eta-1\ll\eta\log p$ for $p\leq y$ and
\begin{align*}
\eta\sum_{p\leq y}(\log p)/p\ll1.
\end{align*}

Using \eqref{eq:covariance} and discarding restrictions on $v$ and $w,$
we obtain
\begin{align*}
 \E[|T_y(u)|^2]
 &\leq\sum_{v,w\geq1}|g(v,w)|
       \Psi\left(\frac{u}{\max\{v,w\}},y\right)\\
 &\ll u^{1-\eta}(\log y)
       \sum_{v,w\geq1}\frac{|g(v,w)|}
       {\max\{v,w\}^{1-\eta}}.
\end{align*}
For sufficiently large $y,$ we have
$\max\{v,w\}^{1-\eta}\geq(vw)^{5/12},$ and applying
\eqref{eq:kernel-norm} with $\sigma=5/12$ proves the lemma.
\end{proof}

\section{The upper Euler-product estimate}

For a Steinhaus random multiplicative function $f,$ let
\begin{align*}
 F_Y^{\mathrm{St}}(s):=\prod_{p\leq Y}(1-f(p)p^{-s})^{-1}.
\end{align*}
Harper's Euler-product estimate, in the formulation of
Hamdan \cite[Proposition~1.2]{Hamdan}, states that uniformly for sufficiently
large $Y$ and $0\leq q\leq1,$ we have
\begin{align} \label{eq:hamdan-input}
 \E\left[\left(\int_0^1
 |F_Y^{\mathrm{St}}(1/2+it)|^2dt\right)^q\right]
 \ll
 \left(\frac{\log Y}{D_q(Y)}\right)^q.
\end{align}

The term involving the pseudo-variance $\rho$ is not stationary, although it
is bounded on fixed compact intervals away from the origin.  We therefore
remove the small primes at a scale depending on the interval and then compare
with the circular Gaussian field underlying \eqref{eq:hamdan-input}.

\begin{lemma}\label{lem:local-upper-general}
Fix a sufficiently large absolute constant $c_0,$ and put $L=\log y$
for sufficiently large $y.$  Then the following estimates hold.

\begin{enumerate}
\item[\rm (i)] If $L^{-1/2}\leq h\leq1$ and $z=\exp(c_0/h),$ then uniformly
for $2/3\leq q\leq1,$ we have
\begin{align*}
 \E\left[\left(\int_h^{2h}|F_y(1/2+it)|^2dt\right)^q\right]
 \ll_d\left(\frac{hL}{D_q(y)}\right)^q.
\end{align*}
The same estimate holds on $[-2h,-h].$

\item[\rm (ii)] Put $b_n=c_0\log^2(n+3)$ for $n\in \N.$  If
$b_n\leq L^{1/2},$ then uniformly for $2/3\leq q\leq1,$ we have
\begin{align*}
 \E\left[\left(\int_n^{n+1}|F_y(1/2+it)|^2dt\right)^q\right]
 \ll_d b_n^{1-q}\left(\frac{L}{D_q(y)}\right)^q.
\end{align*}
The same estimate holds with $[n,n+1]$ replaced by $[-n-1,-n].$
\end{enumerate}
\end{lemma}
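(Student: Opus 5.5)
Write $F_y=F_z\,F_{(z,y]}$, with $z=\exp(c_0/h)$ in (i) and $z=\exp(b_n)$ in (ii). I would first remove the small primes and then transfer the large-prime part to the Steinhaus model.

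\emph{Removing the small primes.} Condition on the local parameters at primes $p\le z$. The factor $|F_z(1/2+it)|^2$ is then a positive weight, so I would absorb it into the measure $d\nu(t)=|F_z(1/2+it)|^2\mathbf 1_{I}(t)\,dt$, where $I=[h,2h]$ or $[n,n+1]$. The large-prime quantity is independent of this weight. Applying Lemma~\ref{lem:gaussian-replacement} conditionally, with $\mathcal P=(z,y]$ and $\sigma=1/2$, replaces the Euler product by $m_{\mathcal P,1/2}\,e^{H-\frac12\E H^2}$, at the cost of a factor $c_d^q$. The covariance of $H=H_{\mathcal P,\rho,1/2}$ is
\begin{align*}
 \E[H(t)H(u)]=2\Re\sum_{z<p\le y}p^{-1-i(t-u)}+2\Re\Big(\bar\rho\sum_{z<p\le y}p^{-1-i(t+u)}\Big).
\end{align*}

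\emph{Transferring to the Steinhaus model.} The Steinhaus field is the case $\rho=0$, so the two covariances differ only by the pseudo-variance term. By Lemma~\ref{lem:prime-sum} with $a=\log z$ and $v=t+u$, this term is bounded. In case (i) we have $|t+u|\ge 2h$ and $a=c_0/h$, so the integral is $\ll (|v|a)^{-1}\ll 1/c_0$. The prime number theorem error is $(1+|v|)(1+\sqrt a)e^{-c\sqrt a}$, which is $O(1)$ once $c_0$ is large, since $|v|\le 4h\le 4$. In case (ii) we have $|v|\asymp n$ and $a=b_n=c_0\log^2(n+3)$, so the error is $\ll n e^{-c\sqrt{c_0}\log(n+3)}\log(n+3)=O(1)$. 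The same holds on the negative intervals. Lemma~\ref{lem:kahane} then changes the Gaussian field to the Steinhaus Gaussian field at a cost $e^{O(q(1-q))}$. Running Lemma~\ref{lem:gaussian-replacement} backwards with $d=1$ and $\rho=0$ converts this into $\E[(\int|F_z|^2|F^{\mathrm{St}}_{(z,y]}|^2\,dt)^q]$.

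\emph{Reassembling a full Steinhaus product.} The small primes still carry the general law. I would replace them by Steinhaus primes as well, applying Lemma~\ref{lem:gaussian-replacement} to the primes $p\le z$ conditionally on the now-Steinhaus large primes. This produces a Gaussian field on the primes $p\le z$ with pseudo-variance $\rho$. Its pseudo-variance term is not bounded: at $t+u\approx 2h$ it has size about $\sum_{p\le z}p^{-1}\cos(2h\log p)$. To handle this I would use the fact that $h\log p\le c_0$ for $p\le z$, so on this range $p^{-2it}$ is essentially constant. The term therefore contributes roughly $\log\log z$, which should lose at most a factor $(\log z)^{O(1-q)\cdot q}$ through Kahane's inequality.

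The main obstacle is the case $2/3\le q<1$. I expect the loss to be absorbed as follows. The Steinhaus estimate \eqref{eq:hamdan-input}, rescaled to windows of length $h$ (using translation invariance of the Steinhaus law and subadditivity in $q$), gives $(hL/D_q)^q$ times the small-prime variance, and the small-prime weight has expectation $(\log z)^{q}\approx h^{-q}$. So the only loss is in the factor $(\log z)^{O(q(1-q))}$. In case (ii) this is exactly $b_n^{1-q}$, as stated. In case (i) I would avoid the loss by instead applying Hölder's inequality to the small primes after conditioning: bound the $q$-th moment of $\int|F_z|^2|F^{\mathrm{St}}_{(z,y]}|^2$ by $\E[\sup_I|F_z|^{2q}]$ times the large-prime bound. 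Since $I$ has length $h$ and the primes $p\le z$ oscillate by at most $O(c_0)$ on it, $\sup_I|F_z|^2$ is comparable to $|F_z(1/2+ih)|^2$, whose $q$-th moment is $\ll (\log z)^{q}\ll h^{-q}$. The large-prime part, being Steinhaus, gives $(h^2L/D_q)^q\cdot(\log z)^{-q}$-type savings via \eqref{eq:hamdan-input} at conditional level. Combining these yields $(hL/D_q(y))^q$. Verifying this last combination — that conditioning on small primes does not destroy the $D_q$ saving — is the step I expect to need the most care.
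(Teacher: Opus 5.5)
Your part (ii) works, by a route different from the paper's. The modulus of the pseudo-variance sum over $p\le e^{b_n}$ is trivially at most $\log b_n+O(1)$, and the rest is $O(1)$ by Lemma~\ref{lem:prime-sum}. So Lemma~\ref{lem:kahane} costs $O(b_n^{q(1-q)})\le O(b_n^{1-q})$, and translation invariance of the Steinhaus law moves $[n,n+1]$ to $[0,1]$.

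Part (i), however, has a genuine gap, exactly at the step you flag. The small primes need no supremum. Condition on the primes in $(z,y]$ and apply Jensen's inequality with $\E[|F_z(1/2+it)|^2]\asymp_d\log z$; this costs exactly $(\log z)^q$. Your claim that $\sup_I|F_z|^2$ is comparable to $|F_z(1/2+ih)|^2$ is not true pointwise anyway, since $h\log p$ ranges up to the large constant $c_0$. Everything then rests on
\begin{align*}
 \E\Big[\Big(\int_h^{2h}|F_{(z,y]}(1/2+it)|^2dt\Big)^q\Big]\ll_d\Big(\frac{h^2L}{D_q(y)}\Big)^q,
\end{align*}
and nothing in your proposal produces this. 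The input \eqref{eq:hamdan-input} concerns a full product over all $p\le Y$ on a unit interval. It cannot be invoked ``at conditional level'' for the primes in $(z,y]$ on a window of length $h$. Translation invariance and concavity give window bounds only for the full product. Stripping the small primes from such a bound recovers only the factor $\E[|F^{\mathrm{St}}_z(1/2+it)|^{2q}]\asymp(\log z)^{q^2}$, not $(\log z)^q$, which brings back the loss $h^{-q(1-q)}$.

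The missing idea is a change of scale that turns the large-prime product on a short window into a full Steinhaus product on a unit window. With $b=c_0/h=\log z$, cut $[h,2h]$ into $O(1)$ pieces $[t_0,t_0+\delta]$ with $\delta\le b^{-1}$, and put $t=t_0+u/b$. Lemma~\ref{lem:prime-sum} gives
\begin{align*}
 2\sum_{z<p\le y}\frac{\cos((u-v)\log p/b)}{p}=2\int_1^{L/b}\cos((u-v)w)\,\frac{dw}{w}+O(1).
\end{align*}
Up to $O(1)$, this is the covariance of the Steinhaus field over $p\le Y:=\exp(L/b)$ at $u,v\in[0,1]$, and the pseudo-variance term is $O(1)$ as you showed. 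Now apply Lemma~\ref{lem:gaussian-replacement}, then Lemma~\ref{lem:kahane}, then Lemma~\ref{lem:gaussian-replacement} for the Steinhaus model, using $m_{\{p:z<p\le y\},1/2}\asymp L/b\asymp m_{\{p:p\le Y\},1/2}$. This bounds each piece by $b^{-q}\E[(\int_0^1|F^{\mathrm{St}}_Y(1/2+iu)|^2du)^q]\ll(L/(b^2D_q(Y)))^q$. Since $b\le c_0L^{1/2}$, we have $D_q(Y)\asymp D_q(y)$. Multiplying by $(\log z)^q=b^q$ gives $(hL/D_q(y))^q$. The paper runs this same rescaled comparison in part (ii) as well, with $O(b_n)$ pieces, which is where the factor $b_n^{1-q}$ comes from.
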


\begin{proof}
The local expansion furnished by \eqref{eq:conditions}, together with
Mertens' estimate, gives uniformly in $t$ that
\begin{equation}\label{eq:prime-second-uniform}
 \E[|F_z(1/2+it)|^2]\asymp_d\log z.
\end{equation}
Indeed, each local expectation equals $1+p^{-1}+O_d(p^{-3/2}).$  On
conditioning on the primes in $(z,y],$ for every interval $I,$ Jensen's inequality and
\eqref{eq:prime-second-uniform} give
\begin{equation}\label{eq:low-prime-removal}
 \E\left[\left(\int_I|F_y(1/2+it)|^2dt\right)^q\right]
 \ll_d(\log z)^q \,
 \E\left[\left(\int_I|F_{(z,y]}(1/2+it)|^2dt\right)^q\right].
\end{equation}

We prove both parts by the same comparison, taking $b=c_0/h$ in
part~\textup{(i)} and $b=b_n$ in part~\textup{(ii)}, with $z=\exp(b).$
Partition the interval into subintervals $I=[t_0,t_0+\delta]$ with
$\delta\leq b^{-1},$ put $J=[0,b\delta]\subseteq[0,1],$ and write
$t=t_0+u/b.$  For
\begin{align*}
 \widetilde H(u):=H_{\{p:z<p\leq y\},\rho,1/2}(t_0+u/b),
\end{align*}
we have
\begin{align*}
 \E[\widetilde H(u)\widetilde H(v)]
={}&2\sum_{z<p\leq y}\frac{\cos((u-v)\log p/b)}p
 +2\Re\left(\rho\sum_{z<p\leq y}
 p^{-1-i(2t_0+(u+v)/b)}\right).
\end{align*}
Applying Lemma~\ref{lem:prime-sum}, we obtain for the first term
\begin{align*}
 2\int_1^{L/b}\cos((u-v)w)\frac{dw}{w}+O(1),
\end{align*}
which differs by $O(1)$ from
\begin{align*}
 \E[H_{\{p:p\leq Y\},0,1/2}(u)
              H_{\{p:p\leq Y\},0,1/2}(v)],
 \qquad Y:=\exp(L/b).
\end{align*}
This is the circular Gaussian field associated with a Steinhaus Euler
product.  The second term is also $O(1)$, since in part~\textup{(i)} its
frequency has modulus at least $2h$ and $hb=c_0,$ while in
part~\textup{(ii)} its modulus is comparable with $n+1$ and
\begin{align*}
 (n+1)(1+\sqrt{b_n})e^{-c\sqrt{b_n}}\ll1
\end{align*}
on choosing $c_0$ sufficiently large.  Thus the two covariance kernels
differ by $O(1)$ for all $u,v\in J,$ including on the diagonal.  Moreover,
Mertens' estimate gives
\begin{align*}
 m_{\{p:z<p\leq y\},1/2}\asymp L/b
 \asymp m_{\{p:p\leq Y\},1/2}.
\end{align*}
Applying Lemma~\ref{lem:gaussian-replacement}, followed by the change of
variables $t=t_0+u/b$ and Lemma~\ref{lem:kahane}, and then applying
Lemma~\ref{lem:gaussian-replacement} to the Steinhaus model, we obtain
\begin{gather*}
 \E\left[\left(\int_I
     |F_{(z,y]}(1/2+it)|^2dt\right)^q\right]\\
\asymp_d b^{-q}m_{\{p:z<p\leq y\},1/2}^q \,
 \E\left[\left(\int_J
 e^{\widetilde H(u)-\frac12\E[\widetilde H(u)^2]}du
 \right)^q\right]\\
\ll_d b^{-q}m_{\{p:z<p\leq y\},1/2}^q \,
 \E\left[\left(\int_J
 e^{H_{\{p:p\leq Y\},0,1/2}(u)
 -\frac12\E[H_{\{p:p\leq Y\},0,1/2}(u)^2]}du
 \right)^q\right]\\
\asymp_d b^{-q}
 \left(\frac{m_{\{p:z<p\leq y\},1/2}}
 {m_{\{p:p\leq Y\},1/2}}\right)^q
 \E\left[\left(\int_J
 |F_Y^{\mathrm{St}}(1/2+iu)|^2du\right)^q\right].
\end{gather*}
Since $J\subseteq[0,1]$ and the two normalizing factors are comparable,
applying \eqref{eq:hamdan-input} yields
\begin{align*}
 \E\left[\left(\int_I|F_{(z,y]}(1/2+it)|^2dt\right)^q\right]
 \ll_d\left(\frac{L}{b^2D_q(Y)}\right)^q.
\end{align*}
Since $b\leq c_0L^{1/2},$ we have $D_q(Y)\asymp D_q(y)$ for sufficiently
large $y,$ and using \eqref{eq:low-prime-removal} gives
\begin{align*}
 \E\left[\left(\int_I|F_y(1/2+it)|^2dt\right)^q\right]
 \ll_d\left(\frac{L}{bD_q(y)}\right)^q.
\end{align*}
Using the sub-additivity $(\sum_j u_j )^q\leq\sum_j u_j^q$ for $u_j\geq0,$
we sum over $O(1)$ subintervals in part~\textup{(i)} and $O(b_n)$ subintervals in
part~\textup{(ii)} to prove both assertions, while
on the intervals $[-2h,-h]$ and $[-n-1,-n]$ the sum-frequency remains
bounded away from zero and the same argument applies.
\end{proof}

The preceding local estimates cover the whole line after a dyadic
decomposition near the origin and a unit decomposition at large heights.

\begin{proposition}\label{prop:general-mass-upper}
Uniformly for sufficiently large $y$ and $0\leq q\leq1,$ we have
\begin{align*}
 \E\left[\left(\int_{\R}
 \left|\frac{F_y(1/2+it)}{1/2+it}\right|^2dt\right)^q\right]
 \ll_d\left(\frac{\log y}{D_q(y)}\right)^q.
\end{align*}
\end{proposition}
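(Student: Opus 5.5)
We split into two ranges of $q$. For $0\le q\le 2/3$ we have $D_q(y)\asymp\sqrt{\log\log y}$, so it suffices to prove the bound at $q=2/3$ and pass to smaller $q$. Hölder's inequality gives
\begin{align*}
\E[M^q]\le\E[M^{2/3}]^{3q/2},\qquad M:=\int_{\R}\left|\frac{F_y(1/2+it)}{1/2+it}\right|^2dt .
\end{align*}
The right-hand side is at most $(\log y/\sqrt{\log\log y})^q$ up to constants, and $D_q(y)\asymp D_{2/3}(y)$ uniformly on this range. (At $q=0$ the statement is trivial.) So the main work is the range $2/3\le q\le1$.

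For $2/3\le q\le1$, put $L=\log y$ and decompose $\R$ into four pieces:
\begin{enumerate}
\item[(a)] the central interval $|t|\le L^{-1/2}$;
\item[(b)] the dyadic intervals $[2^{-k-1},2^{-k}]$ with $2^{-k-1}\ge L^{-1/2}$, and their negatives, with $h=2^{-k-1}$;
\item[(c)] the unit intervals $[n,n+1]$ with $b_n\le L^{1/2}$, and their negatives;
\item[(d)] the remaining tail $|t|\ge N$, where $b_N\approx L^{1/2}$, so that $N\approx\exp(c\,L^{1/4})$.
\end{enumerate}
Sub-additivity $(\sum u_j)^q\le\sum u_j^q$ lets us bound each piece separately. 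The weight $|1/2+it|^{-2}$ is $\asymp1$ on (a) and (b), and $\asymp n^{-2}$ on $[n,n+1]$.

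The dyadic intervals (b) are handled by Lemma~\ref{lem:local-upper-general}(i). It gives a total of $\sum_k (2^{-k}L/D_q)^q$, a geometric series dominated by $k=0$, hence $\ll(L/D_q)^q$ since $q\ge2/3$.

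The unit intervals (c) are handled by Lemma~\ref{lem:local-upper-general}(ii). After taking the $q$-th power, the weight contributes $n^{-2q}\le n^{-4/3}$, so the total is
\begin{align*}
\sum_n n^{-2q}\,b_n^{1-q}\left(\frac{L}{D_q}\right)^q\ll\left(\frac{L}{D_q}\right)^q,
\end{align*}
because $b_n^{1-q}\ll\log^2(n+3)$ and $\sum_n n^{-4/3}\log^2(n+3)<\infty$.

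The central interval (a) has length $2L^{-1/2}$. By Jensen's inequality and \eqref{eq:prime-second-uniform} with $z=y$, its contribution is at most $(L\cdot L^{-1/2})^q=L^{q/2}$. This is $\ll(L/D_q)^q$ because $D_q\le\sqrt{\log\log y}\le L^{1/2}$.

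The tail (d) is also handled by Jensen's inequality and \eqref{eq:prime-second-uniform}, since those bounds hold uniformly in $t$. The contribution is at most
\begin{align*}
\left(L\int_{|t|\ge N}\frac{dt}{1/4+t^2}\right)^q\ll(L/N)^q,
\end{align*}
which is far smaller than $(L/D_q)^q$ because $N\ge D_q$ for large $y$.

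The main point needing care is bookkeeping rather than a new idea. One must check that the constraints of Lemma~\ref{lem:local-upper-general} ($h\ge L^{-1/2}$ and $b_n\le L^{1/2}$) leave gaps that the trivial Jensen bound can cover. Near the origin this works because $D_q$ is at most $\sqrt{\log\log y}$, much smaller than $L^{1/2}$. At large heights it works because the tail cutoff $N$ is enormous. The other thing to check is that the implied constants in the lemma are uniform in $h$, $n$ and $q\in[2/3,1]$, which they are as stated. Summing the four contributions gives the proposition.
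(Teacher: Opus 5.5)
Your proposal is correct and follows the paper's proof essentially step for step. The steps are: Jensen's inequality with \eqref{eq:prime-second-uniform} near the origin and on the far tail, Lemma~\ref{lem:local-upper-general}(i) on the dyadic pieces, Lemma~\ref{lem:local-upper-general}(ii) on the unit intervals, and Jensen from $q=2/3$ for smaller $q$. Applying Jensen once to the whole tail $|t|\geq N$, rather than interval by interval as the paper does, is a harmless variant.

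One bookkeeping fix is needed. Take the central piece to be $|t|\leq 2L^{-1/2}$, as the paper does. Your smallest dyadic interval starts somewhere in $[L^{-1/2},2L^{-1/2})$, so with $|t|\leq L^{-1/2}$ a short gap is left uncovered. The same Jensen bound $O_d(L^{q/2})$ handles that gap.
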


\begin{proof}
Put $L=\log y$ and first suppose that $2/3\leq q\leq1.$  By
\eqref{eq:prime-second-uniform} and Jensen's inequality, the interval
$|t|\leq2L^{-1/2}$ contributes at most $O_d(L^{q/2}),$ which is bounded by
$O_d((L/D_q(y))^q).$  The remaining part of $[-1,1]$ is covered by intervals
of the form $[2^{-r},2^{1-r}]$ and their reflections, where
$2^{-r}\geq L^{-1/2}.$  Lemma~\ref{lem:local-upper-general}\textup{(i)} and
the convergence of $\sum_{r\geq0}2^{-rq}$ give a total contribution
$O_d((L/D_q(y))^q).$

For $|t|\geq1,$ use the unit intervals and
Lemma~\ref{lem:local-upper-general}\textup{(ii)} whenever
$b_n\leq L^{1/2}.$  After including the factor
$|1/2+it|^{-2},$ their total contribution is
\begin{align*}
 \ll_d\left(\frac{L}{D_q(y)}\right)^q
 \sum_{n\geq1}n^{-2q}\{\log(n+3)\}^{2(1-q)}
 \ll_d\left(\frac{L}{D_q(y)}\right)^q.
\end{align*}
The estimate is uniform for $q\geq2/3.$  If $b_n>L^{1/2},$ then
$n\geq\exp(cL^{1/4})$ for a positive constant $c.$  The elementary estimate
\eqref{eq:prime-second-uniform}, followed by Jensen's inequality, bounds the unweighted
moment on every unit interval by $O_d(L^q).$  Hence the remaining tail is
\begin{align*}
 \ll_d L^q\sum_{n\geq\exp(cL^{1/4})}n^{-2q}
 \ll_d L^qe^{-c'L^{1/4}}
 \ll_d\left(\frac{L}{D_q(y)}\right)^q.
\end{align*}

If $0<q<2/3,$ applying Jensen's inequality to the estimate just proved at
$q=2/3$ and using $D_q(y)\asymp\sqrt{\log\log y}$ throughout this range gives
the required estimate with a constant uniform in $q,$ while the case $q=0$ is trivial.
\end{proof}

\section{Proof of Theorem~\ref{thm:main}: Upper bound}

We now pass from the Euler-product estimate to the original partial sum, following the smooth--rough decomposition in
\cite[Lemma~1.2 and Section~2]{GW2025}, with
Lemma~\ref{lem:covariance} replacing the exact orthogonality used there.

\begin{proposition}\label{prop:sum-upper}
Uniformly for sufficiently large $x$ and $0\leq q\leq1,$ we have
\begin{align*}
 \E[|S(x)|^{2q}]\ll_d\left(\frac{x}{D_q(x)}\right)^q.
\end{align*}
\end{proposition}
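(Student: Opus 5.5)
We follow the Gorodetsky--Wong smooth--rough decomposition. We may assume $2/3\leq q\leq1$: for $q<2/3$, Hölder's inequality applied to the $q=2/3$ bound gives the claim, because $D_q(x)\asymp\sqrt{\log\log x}$ throughout that range. Put $y:=x^{1/\log\log x}$ (any choice with $\log\log y\sim\log\log x$ and $\log x/\log y\to\infty$ slowly would also do). Every $n\leq x$ factors uniquely as $n=ab$, where $P^+(a)\leq y$ and every prime factor of $b$ exceeds $y$. Accordingly,
\begin{align*}
 S(x)=T_y(x)+\sum_{\substack{1<b\leq x\\ p\mid b\Rightarrow p>y}}\X(b)\,T_y(x/b).
\end{align*}

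The first term is handled directly. Hölder's inequality and Lemma~\ref{lem:smooth-second} with $u=x$ give $\E[|T_y(x)|^{2q}]\ll_d (x\log y\, e^{-\log x/\log y})^q$. Since $e^{-\log\log x}=1/\log x$, this is negligible compared with $(x/D_q(x))^q$.

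For the rough part, condition on $(\boldsymbol\alpha(p))_{p\leq y}$. Since $\X(b)$ depends only on the primes above $y$, Hölder's inequality over the large-prime randomness followed by Lemma~\ref{lem:covariance} bounds the conditional second moment by
\begin{align*}
 \ll_d\sum_{r}\sum_{u,v}|g(u,v)|\,\bigl|T_y(x/(ru))\bigr|\,\bigl|T_y(x/(rv))\bigr|,
\end{align*}
with $r,u,v$ composed of primes $>y$. The diagonal $u=v=1$ carries the main term, which is $\sum_{r>1}|T_y(x/r)|^2$ over $y$-rough $r$. The off-diagonal terms require $uv\geq y^3$ (because $g(p^k,p^l)=0$ for $1\leq k+l\leq2$), so Cauchy--Schwarz together with \eqref{eq:kernel-norm} makes them a factor $y^{-c}$ smaller, and they can be absorbed. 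In the diagonal, group the $r$ into intervals of length $\asymp x/(X\log X)$ around $x/X$. This replaces the sum by
\begin{align*}
 \ll \frac{x}{\log y}\int_1^{x/y}|T_y(t)|^2\frac{dt}{t^2},
\end{align*}
using that the density of $y$-rough integers near $x/t$ is $\asymp1/\log y$ when $x/t\geq y$; the rough integers with $r\in(x/y,x]$ are handled crudely. Replacing $T_y$ over a short interval by its value at an endpoint costs a short-interval error, which is controlled in mean square by \eqref{eq:short-second}.

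Then Lemma~\ref{lem:mellin} with $\eta=0$ converts the integral into $\int_{\R}|F_y(1/2+it)/(1/2+it)|^2dt$. Taking $q$-th moments and applying Jensen's inequality to the conditional expectation gives
\begin{align*}
 \E[|S(x)-T_y(x)|^{2q}]\ll_d\left(\frac{x}{\log y}\right)^q\E\left[\left(\int_{\R}\left|\frac{F_y(1/2+it)}{1/2+it}\right|^2dt\right)^q\right].
\end{align*}
Proposition~\ref{prop:general-mass-upper} bounds the right-hand side by $(x/D_q(y))^q\asymp(x/D_q(x))^q$.

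The main obstacle is the passage from the discrete sum over rough $r$ to the Mellin integral without losing more than a constant. This needs two things. First, the rough-number density estimate must hold uniformly for $x/t$ down to about $y$, for which the fundamental lemma of the sieve suffices. Second, the errors from non-orthogonality and from replacing $T_y(x/r)$ by a local average must be bounded in $L^{2q}$ after conditioning; this is where \eqref{eq:short-second}, Lemma~\ref{lem:smooth-second} and the $b^{5/6}$ saving are used. If the crude range $r>x/y$ causes trouble, we instead apply Lemma~\ref{lem:mellin} with a small $\eta=1/\log x$ weight, as Gorodetsky--Wong do; this only changes $F_y$ by a bounded factor.
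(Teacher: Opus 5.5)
Your proposal is correct and follows the paper's route: conditioning on $p\le y$ with conditional Jensen, the covariance expansion whose off-diagonal terms save $y^{-c}$ because $g(p^k,p^l)=0$ for $1\le k+l\le2$, short-interval smoothing controlled by a rough-number sieve bound and \eqref{eq:short-second}, and then Lemma~\ref{lem:mellin} with Proposition~\ref{prop:general-mass-upper}. The differences are minor: the paper takes $\log y=\log x/(\log\log x)^2$, so that Lemma~\ref{lem:smooth-second} disposes of all $r<\sqrt x$ and the crude Selberg bound suffices for $r\ge\sqrt x$, whereas your $\log y=\log x/\log\log x$ lets that lemma handle only $r=1$ and needs the sieve upper bound in intervals of length about $r/\log x$ down to $r\approx y$ (available by sieving only up to a small power of the interval length), and you should drop the remark about treating rough $r\in(x/y,x]$ ``crudely'', since those $r$ correspond to $t=x/r\in[1,y)$ and already lie inside your integral, while a first-moment bound there only gives $O(x)$, hence $O(x^q)$, losing the factor $D_q(x)^{-q}$.
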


\begin{proof}
We first suppose that $2/3\leq q\leq1,$ and put
\begin{align*}
 \tau:=\log\log x,\qquad
 y:=\exp\left(\frac{\log x}{\tau^2}\right),\qquad
 H:=(\log x)^2.
\end{align*}
Let $\mathcal R_y$ be the set of positive integers having no prime divisor at
most $y,$ and abbreviate $T_y(u)$ to $T(u).$
Unique factorization into a $y$-smooth part and a $y$-rough part gives
\begin{align*}
 S(x)=\sum_{\substack{r\leq x\\r\in\mathcal R_y}}\X(r)T(x/r).
\end{align*}

Condition on the primes $p\leq y,$ and write $\E_{>y}$ for the
remaining expectation.  Define
\begin{align*}
 \mathcal V_x:=\E_{>y}[|S(x)|^2],
 \qquad
 \mathcal V_0:=\sum_{\substack{r\leq x\\r\in\mathcal R_y}}|T(x/r)|^2.
\end{align*}
The conditional version of \eqref{eq:covariance} gives
\begin{align*}
 \mathcal V_x-\mathcal V_0
 =\sum_{\substack{u,v\in\mathcal R_y\\u+v>2}}g(u,v)
   \sum_{\substack{r\in\mathcal R_y\\r\leq x/\max\{u,v\}}}
   T(x/(ur))\overline{T(x/(vr))}.
\end{align*}
By the Cauchy--Schwarz inequality and \eqref{eq:short-second}, we have
\begin{align*}
 \E[|\mathcal V_x-\mathcal V_0|]
 &\ll_d x(\log x)
 \sum_{\substack{u,v\in\mathcal R_y\\u+v>2}}
                 \frac{|g(u,v)|}{\sqrt{uv}}\\
 &\ll_d x(\log x)y^{-1/2}.
\end{align*}
For the last estimate, the Euler product in \eqref{eq:kernel-norm} restricted
to primes exceeding $y$ is $1+O_d(\sum_{p>y}p^{-3/2}).$
Applying conditional Jensen's inequality and then
$(u+v)^q\leq u^q+v^q$ for $u,v\geq0,$ we obtain
\begin{align*}
 \E[|S(x)|^{2q}]
 &\leq\E[\mathcal V_x^q]\\
 &\leq\E[\mathcal V_0^q]
       +\E[|\mathcal V_x-\mathcal V_0|^q]\\
 &\leq\E[\mathcal V_0^q]
       +\bigl(\E[|\mathcal V_x-\mathcal V_0|]\bigr)^q.
\end{align*}
The preceding estimate therefore gives
\begin{equation}\label{eq:conditional-upper}
 \E[|S(x)|^{2q}]
 \leq\E[\mathcal V_0^q]
 +O_d((x(\log x)y^{-1/2})^q).
\end{equation}

The part of $\mathcal V_0$ with $r<\sqrt x$ is negligible.  Indeed,
Lemma~\ref{lem:smooth-second} gives
\begin{align*}
 \E\left[\sum_{\substack{r<\sqrt x\\r\in\mathcal R_y}}
 |T(x/r)|^2\right]
 \ll_d x(\log x)(\log y)
       \exp\left(-\frac{\log x}{2\log y}\right).
\end{align*}
For $r\geq\sqrt x,$ averaging over a short interval gives
\begin{align*}
 |T(x/r)|^2
 \leq\frac{2H}{r}\int_r^{r(1+H^{-1})}|T(x/t)|^2dt
+\frac{2H}{r}\int_r^{r(1+H^{-1})}|T(x/r)-T(x/t)|^2dt.
\end{align*}
By the Selberg upper-bound sieve, uniformly for $M\geq1,$ we have
\begin{align*}
 \#\{a<n\leq a+M:n\in\mathcal R_y\}
 \ll M/\log y+y^2.
\end{align*}
Since
$\sqrt x/H\gg y^4,$ for $\sqrt x\leq t\leq2x,$ we obtain
\begin{align*}
 \sum_{\substack{t/(1+H^{-1})<r\leq t\\r\in\mathcal R_y}}
       \frac Hr
 \ll\frac1{\log y}.
\end{align*}
Changing the order of integration bounds the sum of the first integrals by
\begin{align*}
 \frac{c x}{\log y}\int_1^\infty|T(u)|^2\frac{du}{u^2}.
\end{align*}

Applying \eqref{eq:short-second} and the same sieve estimate to the error
integrals, we obtain
\begin{align*}
 \E\left[\sum_{\substack{\sqrt x\leq r\leq x\\r\in\mathcal R_y}}
 \frac Hr\int_r^{r(1+H^{-1})}|T(x/r)-T(x/t)|^2dt\right]
&\ll_d
 \sum_{\substack{\sqrt x\leq r\leq x\\r\in\mathcal R_y}}
       \left(\frac{x}{rH}+\left( \frac{x}{r} \right)^{5/6}\right)\\
 &\ll_d
 \frac{x\log x}{H\log y}+\frac{x}{\log y}
 \ll_d\frac{x}{\log y}.
\end{align*}
The penultimate estimate follows by partial summation from
$\#(\mathcal R_y\cap[1,u])\ll u/\log y$ for $u\geq\sqrt x.$
The remaining errors in \eqref{eq:conditional-upper} and in the range
$r<\sqrt x$ are $o_d(x/\log y).$  Jensen's inequality and $(u+v)^q\leq u^q+v^q$ therefore give
\begin{equation}\label{eq:upper-reduction}
 \E[|S(x)|^{2q}]
 \ll_d\left(\frac{x}{\log y}\right)^q
 \left\{\E\left[\left(\int_1^\infty|T(u)|^2\frac{du}{u^2}\right)^q\right]+1\right\}.
\end{equation}

Since $\log\log y=\tau-2\log\tau\asymp\tau,$
Lemma~\ref{lem:mellin}, Proposition~\ref{prop:general-mass-upper}, and
\eqref{eq:upper-reduction} prove the required estimate in the present range.
For $0<q<2/3,$ applying Jensen's inequality to the estimate at $q=2/3$ and
using $D_q(x)\asymp\sqrt{\log\log x}$ in this range gives the stated uniform
bound, while the endpoint $q=0$ is trivial.
\end{proof}

\section{The lower Euler-product estimate}

To prove the lower bound, we compare with Harper's Rademacher Euler product
on an interval bounded away from the origin, where the contribution of
$\rho$ to the covariance is bounded.

\begin{lemma}\label{lem:general-lower-mass}
Let $J=[1/3,1/2].$  Uniformly for all sufficiently large $y$ and all
parameters satisfying
\begin{align*}
 2/3\leq q\leq1,
 \qquad
 1\leq V\leq(\log y)^{1/100},
\end{align*}
we have
\begin{align*}
 \E\left[\left(
 \int_J|F_y(1/2+4V/\log y+it)|^2dt
 \right)^q\right]
 \gg_d\left(\frac{\log y}{V D_q(y)}\right)^q.
\end{align*}
\end{lemma}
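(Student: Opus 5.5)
We transfer the Euler-product integral to Harper's Rademacher model, mirroring the upper-bound argument of Lemma~\ref{lem:local-upper-general} with the comparison run in the opposite direction. Put $L=\log y$ and $\sigma=1/2+4V/L$. First apply Lemma~\ref{lem:gaussian-replacement} with $\mathcal P=\{p\le y\}$, this $\sigma$, and $\nu$ equal to Lebesgue measure on $J$. This bounds the moment below by
\begin{align*}
c_d^{-q}\, m_{\mathcal P,\sigma}^q\,\E\left[\left(\int_J e^{H_{\mathcal P,\rho,\sigma}(t)-\frac12\E[H_{\mathcal P,\rho,\sigma}(t)^2]}\,dt\right)^q\right].
\end{align*}
Mertens' estimate gives $m_{\mathcal P,\sigma}\asymp L/V$, since the sum of $p^{-1-8V/L}$ over $p\le y$ equals $\log(L/V)+O(1)$.

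Next compare covariances. For $t,u\in J$ we have
\begin{align*}
\E[H(t)H(u)]=2\sum_{p\le y}\frac{\cos((t-u)\log p)}{p^{2\sigma}}+2\Re\Bigl(\rho\sum_{p\le y}p^{-2\sigma-i(t+u)}\Bigr).
\end{align*}
The sum-frequency satisfies $t+u\in[2/3,1]$, so it is bounded away from zero. By Lemma~\ref{lem:prime-sum}, together with partial summation for the damping factor $p^{-8V/L}$ and the small primes handled trivially, the $\rho$-term is $O(1)$ uniformly in $|\rho|\le1$.

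The Rademacher model corresponds to real Gaussians with $\E[g_p^2]=1$, that is, $\rho=1$. Its field is $H_{\mathcal P,1,\sigma}$, whose covariance differs from the one above only through the $\rho$-term. On $J$ that term is again $O(1)$ for the same reason. So the two kernels differ by $O(1)$ on $J\times J$, diagonal included. Lemma~\ref{lem:kahane} then compares the two chaos moments at the cost of a factor $e^{-O(1)}$. Applying Lemma~\ref{lem:gaussian-replacement} once more, now in the Rademacher case, converts the Gaussian chaos back into
\begin{align*}
m_{\mathcal P,\sigma}^{-q}\,\E\left[\left(\int_J|F^{\mathrm{Rad}}_y(\sigma+it)|^2dt\right)^q\right].
\end{align*}
Here $F^{\mathrm{Rad}}_y$ is the Rademacher Euler product $\prod_{p\le y}(1+f(p)p^{-s})$, and its local laws satisfy the moment matching used in the proof of that lemma. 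The normalizing factors cancel up to constants, so it remains to show that
\begin{align*}
\E\left[\left(\int_J|F^{\mathrm{Rad}}_y(1/2+4V/L+it)|^2dt\right)^q\right]\gg\left(\frac{L}{VD_q(y)}\right)^q.
\end{align*}

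Lemma~\ref{lem:gaussian-replacement} is stated for $d$-dimensional unitary parameters. I would observe that its proof only used the local moment identities, so it applies equally to the Rademacher local factor with $h_1=f(p)$ and the $p^{-2s}$ coefficient centered. Alternatively, one can take the Rademacher model directly as $\rho=1$ Gaussian input and skip this step.

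The final estimate should be exactly Harper's barrier lower bound \cite[Sections~5.2--5.3]{HarperLow}, and this is the main obstacle. Harper states his bound for Steinhaus, and for Rademacher on a specific interval. I expect to need to verify that his argument covers the interval $J$ and the shift $4V/\log y$ with $V$ up to $(\log y)^{1/100}$ in the stated uniformity. Harper's Rademacher lower bound is formulated with exactly such a shift and on a fixed interval away from $0$, but I would recheck the uniformity in $2/3\le q\le1$. If his interval differs, I would rescale or translate $J$ and redo the $O(1)$ covariance comparison on the new interval. Otherwise I would rerun his barrier argument, conditioning on the random walk $\sum_{p\le e^{k}}f(p)p^{-\sigma-it}$ staying below a linear barrier, to obtain the factor $D_q(y)^{-q}$.
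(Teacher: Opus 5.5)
Your proposal is correct and follows the paper's proof essentially step for step. Both go through Gaussian replacement for $F_y$, then an $O(1)$ covariance discrepancy that comes only from the $\rho$-term because $t+u\in[2/3,1]$, then Kahane's comparison, then the same replacement argument run for the Rademacher product with $\rho=1$, and finally Harper's Rademacher barrier lower bound. For that last step, the paper confirms that Harper's interval and shift are exactly $J$ and $4V/\log y$, so no rescaling is needed. It obtains uniformity in $2/3\leq q\leq 1$ by applying H\"older's inequality $\E[M_a^q]\geq \E[M_a]^{2-q}/\E[M_a^2]^{1-q}$ to the barrier-restricted mass with $a=\min\{\sqrt{\log\log y},(1-q)^{-1}\}$.
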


\begin{proof}
Let
\begin{align*}
 R_y(s):=\prod_{p\leq y}(1+\varepsilon_pp^{-s}),
\end{align*}
where the $\varepsilon_p\in\{\pm1\}$ are independent Rademacher random
variables, and put $\delta=4V/\log y.$  For $s=\sigma+it,$ the exact identity
\begin{align*}
 |1+\varepsilon_pp^{-s}|^2
 =1+2p^{-\sigma}\Re(\varepsilon_pp^{-it})+p^{-2\sigma}
\end{align*}
and the moment identities
\begin{align*}
 \E[\varepsilon_p]=0,
 \qquad
 \E[|\varepsilon_p|^2]
 =\E[\varepsilon_p^2]=1
\end{align*}
give the same second-order calculation as in the proof of
Lemma~\ref{lem:gaussian-replacement}, now with $\rho=1.$  Since
$(1-p^{-\sigma})^2\leq|1+\varepsilon_pp^{-s}|^2\leq(1+p^{-\sigma})^2,$
the small primes also satisfy the required bounds, and the same proof
compares $R_y$ with the real Gaussian field
\begin{align*}
 H_{\{p:p\leq y\},1,\sigma}(t)
 =2\sum_{p\leq y}g_pp^{-\sigma}\cos(t\log p),
\end{align*}
where the $g_p$ are independent real standard Gaussian random variables.

Since $ t+t'\in [2/3,1]$ and $\delta\geq0,$ taking $\sigma=1/2+\delta,$ Lemma~\ref{lem:prime-sum} and partial summation give, uniformly for $t,t'\in J$ that
\begin{align*}
 \left|2\Re\left((\rho-1)\sum_{p\leq y}
 p^{-1-2\delta-i(t+t')}\right)\right|\ll1.
\end{align*}
This is the difference
between the covariance kernels of the normalized Gaussian fields
corresponding to $F_y(1/2+\delta+it)$ and
$R_y(1/2+\delta+it),$ while their deterministic normalizations are
identical.  Applying Lemma~\ref{lem:gaussian-replacement} to $F_y,$ the
preceding comparison to $R_y,$ and Lemma~\ref{lem:kahane} in both
directions, we obtain
\begin{equation}\label{eq:general-rad-comparison}
 \E\left[\left(
 \int_J|F_y(1/2+\delta+it)|^2dt
 \right)^q\right]
 \asymp_d
 \E\left[\left(
 \int_J|R_y(1/2+\delta+it)|^2dt
 \right)^q\right].
\end{equation}

To apply Harper's restricted Rademacher estimates, put
\begin{align*}
 \tau:=\log\log y,
 \qquad
 a:=\min\{\sqrt{\tau},(1-q)^{-1}\}.
\end{align*}
Let
$\mathcal L_a\subseteq J$ be the random set defined in
\cite[Section~5.3]{HarperLow}, with the parameter $x$ there replaced by
$y,$ and write
\begin{align*}
 M_a:=\frac{V}{\log y}\int_{\mathcal L_a}
 |R_y(1/2+\delta+it)|^2dt.
\end{align*}
The first-moment calculation in \cite[Section~5.2]{HarperLow}, using
Proposition~6 in place of Proposition~5, and the Rademacher version of
Key Proposition~5 explained in \cite[Section~5.3]{HarperLow} give
\begin{equation}\label{eq:restricted-moments}
 \E[M_a]\gg D_q(y)^{-1},
 \qquad
 \E[M_a^2]\ll e^{2a}D_q(y)^{-2}.
\end{equation}
These are precisely Harper's interval and shift, and the number of
barrier increments is $\log\log y-\log V+O(1)\asymp\tau$ throughout
$1\leq V\leq(\log y)^{1/100}.$  In particular, the first-moment factor
is $\min\{1,a/\sqrt\tau\}\asymp D_q(y)^{-1}.$

Applying H\"older's inequality and \eqref{eq:restricted-moments}, we obtain
\begin{align*}
 \E[M_a^q]
 &\geq
 \frac{\E[M_a]^{2-q}}
      {\E[M_a^2]^{1-q}}  \\
 &\gg e^{-2a(1-q)}D_q(y)^{-q}
 \gg D_q(y)^{-q},
\end{align*}
since $a(1-q)\leq1.$  As $\mathcal L_a\subseteq J,$ it follows that
\begin{align*}
 \E\left[\left(
 \int_J|R_y(1/2+\delta+it)|^2dt
 \right)^q\right]
 &\geq
 \left(\frac{\log y}{V}\right)^q
 \E[M_a^q] \\
 &\gg
 \left(\frac{\log y}{V D_q(y)}\right)^q.
\end{align*}
Combining this estimate with
\eqref{eq:general-rad-comparison} proves the lemma.
\end{proof}

\section{Proof of Theorem~\ref{thm:main}: Lower bound}

Following Harper \cite[Propositions~3--4]{HarperLow}, we condition on the primes $p\leq\sqrt{x}$ and
isolate the contribution from primes $p>\sqrt{x}.$  Since the law of
$h_1(\boldsymbol\alpha(p))$ is not necessarily invariant under multiplication
by $-1,$ we use an independent copy to remove the remaining smooth sum.

\begin{lemma}\label{lem:conditional-lower}
Let
\begin{align*}
 W_x:=\sum_{\sqrt x<p\leq x}|S(x/p)|^2.
\end{align*}
Uniformly for $0<q\leq1,$ we have
\begin{align*}
 \E[|S(x)|^{2q}]\gg_d\E[W_x^q].
\end{align*}
\end{lemma}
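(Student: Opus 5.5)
The plan is to isolate the primes in $(\sqrt x,x]$ and then, conditionally on all smaller primes, lower-bound a low moment of a sum of independent centred terms by its conditional variance, using H\"older's inequality and a fourth-moment bound.

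\textbf{Step 1 (decomposition).} Every $n\leq x$ has at most one prime factor $p>\sqrt x$, and such a prime divides $n$ exactly once. Hence
\begin{align*}
 S(x)=A+B,\qquad A:=\sum_{\substack{n\leq x\\ P^+(n)\leq\sqrt x}}\X(n),\qquad B:=\sum_{\sqrt x<p\leq x}h_1(\boldsymbol\alpha(p))\,S(x/p).
\end{align*}
Here we use multiplicativity and $\X(p)=h_1(\boldsymbol\alpha(p))$. Since $x/p<\sqrt x$, each $S(x/p)$ involves only primes at most $\sqrt x$.

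\textbf{Step 2 (conditional second moment).} Condition on $(\boldsymbol\alpha(p))_{p\leq\sqrt x}$ and write $\E'$ for the conditional expectation. Then $A$ and the coefficients $c_p:=S(x/p)$ are fixed. The summands $Y_p:=h_1(\boldsymbol\alpha(p))c_p$ are independent and, by \eqref{eq:conditions}, centred with $\E'|Y_p|^2=|c_p|^2$. Therefore
\begin{align*}
 \E'[|A+B|^2]=|A|^2+W_x.
\end{align*}

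\textbf{Step 3 (conditional fourth moment).} We bound $\E'[|B|^4]$ by expanding $\E'[(B\overline B)^2]$ over quadruples of primes. By independence and centring, only pairings survive, so
\begin{align*}
 \E'[|B|^4]\leq 2\Bigl(\sum_p\E'|Y_p|^2\Bigr)^2+\Bigl|\sum_p\E'[Y_p^2]\Bigr|^2+\sum_p\E'|Y_p|^4.
\end{align*}
Using $|\rho|\leq1$ and $|h_1|\leq d$, this is $\ll_d W_x^2$. Consequently
\begin{align*}
 \E'[|A+B|^4]\leq 8\bigl(|A|^4+\E'[|B|^4]\bigr)\ll_d(|A|^2+W_x)^2.
\end{align*}

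\textbf{Step 4 (H\"older).} For $0<q<1$, interpolate the exponent $2$ between $2q$ and $4$. This gives
\begin{align*}
 \E'[|Z|^{2q}]\geq\frac{(\E'[|Z|^2])^{2-q}}{(\E'[|Z|^4])^{1-q}}
\end{align*}
with $Z=A+B$. By Steps 2 and 3,
\begin{align*}
 \E'[|S(x)|^{2q}]\geq C_d^{-(1-q)}(|A|^2+W_x)^q\geq C_d^{-1}W_x^q.
\end{align*}
The case $q=1$ is immediate from Step 2. Taking the outer expectation proves the lemma, with a constant uniform in $q$.

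The step needing the most care is the fourth-moment bound in Step 3. There $\E[h_1^2]=\rho$ may be nonzero, so the pseudo-variance pairing must be kept, and we handle it through $|\rho|\leq1$. The deterministic-given-conditioning term $A$ needs no symmetry argument, because it only enlarges the conditional variance. If one prefers to mirror the paper's remark, an alternative is to introduce an independent copy $\boldsymbol\alpha'(p)$ for $p>\sqrt x$. Then $S-S'=B-B'$, and $\E|S-S'|^{2q}\leq 4\E|S|^{2q}$, after which the same H\"older argument applies to $B-B'$, which has conditional variance $2W_x$.
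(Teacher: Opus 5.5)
Your proof is correct, and it differs from the paper's in one genuine respect. The paper conditions on the primes $p\leq\sqrt x$ exactly as you do. It then notes that $h_1(\boldsymbol\alpha(p))$ need not be symmetric, and removes the conditionally fixed smooth sum $U=T_{\sqrt x}(x)$ with an independent copy, using $\E_{>\sqrt x}[|S(x)|^{2q}]\geq\frac14\E_{>\sqrt x}[|Z-Z'|^{2q}]$. After that it applies Paley--Zygmund to $|Z-Z'|^2$, whose conditional second and fourth moments are $2W_x$ and at most $(12+8d^2)W_x^2$.

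You show that this symmetrization is unnecessary. Because $B$ is conditionally centred, $A$ only enlarges the conditional variance to $|A|^2+W_x$. The crude bound $|A+B|^4\leq 8(|A|^4+|B|^4)$ keeps the fourth moment $\ll_d(|A|^2+W_x)^2$. So the second/fourth-moment H\"older inequality, which does the same job as Paley--Zygmund here, applies to $S(x)$ directly.

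Your route is shorter and needs no auxiliary copies. It also gives the slightly stronger conditional bound $\E'[|S(x)|^{2q}]\geq C_d^{-(1-q)}(|A|^2+W_x)^q$, whose constant tends to $1$ as $q\to1$. The paper's route instead gets an exact moment computation for $Z-Z'$ with no deterministic shift, at the cost of the factor $1/4$.

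The essential input is the same in both arguments. In the fourth-moment expansion the pseudo-variance pairing $|\rho|^2|\sum_p S(x/p)^2|^2$ survives, and it is controlled by $|\rho|\leq1$ and $|h_1|\leq d$.
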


\begin{proof}
Condition on the random vectors $\boldsymbol\alpha(p)$ with $p\leq\sqrt x,$ and
write $\E_{>\sqrt x}$ and $\mathbb P_{>\sqrt x}$ for the remaining
expectation and probability.  Every integer at most $x$ has at most one
prime divisor exceeding $\sqrt x,$ which occurs to the first power, and hence
\begin{align*}
 S(x)=U+Z,\qquad
 U:=T_{\sqrt x}(x),\qquad
 Z:=\sum_{\sqrt x<p\leq x}h_1(\boldsymbol\alpha(p))S(x/p).
\end{align*}
Here $U$ and all the sums $S(x/p)$ are fixed under the conditioning.  Let
$\boldsymbol\alpha'(p)$ be independent copies of the random vectors with
$p>\sqrt x,$ and define
\begin{align*}
 \xi_p&:=h_1(\boldsymbol\alpha(p))-h_1(\boldsymbol\alpha'(p)),\\
 Z'&:=\sum_{\sqrt x<p\leq x}h_1(\boldsymbol\alpha'(p))S(x/p).
\end{align*}
Since $Z$ and $Z'$ have the same conditional law, the inequality
\begin{align*}
 |Z-Z'|^{2q}\leq2\bigl(|U+Z|^{2q}+|U+Z'|^{2q}\bigr)
\end{align*}
gives, uniformly for $0<q\leq1$ that
\begin{equation}\label{eq:independent-copy}
 \E_{>\sqrt x}[|S(x)|^{2q}]
 \geq\frac14\E_{>\sqrt x}[|Z-Z'|^{2q}].
\end{equation}

The random variables $\xi_p$ are independent and centered, and
\eqref{eq:conditions} gives
\begin{align*}
 \E[|\xi_p|^2]=2,\qquad \E[\xi_p^2]=2\rho,
 \qquad \E[|\xi_p|^4]\leq8d^2,
\end{align*}
where the last inequality follows from
$\E[|\xi_p|^4]\leq4d^2\E[|\xi_p|^2].$  Consequently, we have
\begin{align*}
 \E_{>\sqrt x}[|Z-Z'|^2]=2W_x.
\end{align*}
On expanding the fourth power, independence and centering eliminate all
terms in which a prime occurs only once, while the remaining terms give
\begin{gather*}
 \E_{>\sqrt x}[|Z-Z'|^4]
 =8W_x^2+4|\rho|^2
       \left|\sum_{\sqrt x<p\leq x}S(x/p)^2\right|^2\\
 +\sum_{\sqrt x<p\leq x}
       \bigl(\E[|\xi_p|^4]-8-4|\rho|^2\bigr)|S(x/p)|^4
 \leq(12+8d^2)W_x^2.
\end{gather*}
Here we used $|\rho|\leq1,$ together with
\begin{gather*}
 \left|\sum_{\sqrt x<p\leq x}S(x/p)^2\right|\leq W_x,
 \qquad
 \sum_{\sqrt x<p\leq x}|S(x/p)|^4\leq W_x^2.
\end{gather*}
For $W_x>0,$ the Paley--Zygmund inequality now gives
\begin{gather*}
 \E_{>\sqrt x}[|Z-Z'|^{2q}]
 \geq W_x^q\,\mathbb P_{>\sqrt x}(|Z-Z'|^2\geq W_x)\\
 \geq\frac{W_x^{q+2}}{\E_{>\sqrt x}[|Z-Z'|^4]}
 \geq\frac{W_x^q}{12+8d^2},
\end{gather*}
with the same conclusion immediate when $W_x=0.$  Taking expectations in
\eqref{eq:independent-copy} proves the lemma.
\end{proof}

Next, we adapt the logarithmic weighting and short-interval smoothing in
\cite[Section~2.5]{HarperLow}.

\begin{lemma}\label{lem:variance-lower}
Uniformly for sufficiently large $x$ and $2/3\leq q\leq1,$ we have
\begin{align*}
 \E[W_x^q]\gg_d\left(\frac{x}{D_q(x)}\right)^q.
\end{align*}
\end{lemma}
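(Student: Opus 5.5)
We follow Harper's Section~2.5. The plan is to turn $W_x$ into an integral of the smooth sums $|T_y(u)|^2$ against a logarithmic weight, convert that integral into an Euler-product integral via Lemma~\ref{lem:mellin}, and then apply Lemma~\ref{lem:general-lower-mass}.

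\textbf{Step 1: replace the sum over primes by an integral.} Set $X=\sqrt x$. Since $\E[W_x^q]\geq\E[(\sum_{X<p\leq x^{3/4}}|S(x/p)|^2)^q]$, we may restrict to a range of primes where the prime number theorem in short intervals applies. For each such $p$ we compare $|S(x/p)|^2$ with an average of $|S(x/t)|^2$ over $t\in[p,p(1+1/X)]$, in the manner of the short-interval smoothing in the proof of Proposition~\ref{prop:sum-upper}. The error terms are controlled in mean square by \eqref{eq:short-second}, which suffices because a positive deficit of size $o(x/\log x)$ is harmless after Jensen's inequality and $(u+v)^q\leq u^q+v^q$. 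Summing over primes in short intervals then gives
\begin{align*}
 W_x\geq\frac{c}{\log x}\int_{X}^{x^{3/4}}|S(x/t)|^2\,dt-\mathcal E,
 \qquad \E[\mathcal E]=o\Bigl(\frac{x}{\log x}\Bigr).
\end{align*}
Substituting $u=x/t$ rewrites the main term as $\frac{x}{\log x}\int_{x^{1/4}}^{X}|S(u)|^2\,u^{-2}\,du$.

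\textbf{Step 2: pass to smooth sums with $y=\sqrt x$.} For $u\leq\sqrt x$ we have $S(u)=T_y(u)$. Following Harper, we restrict to $u\leq X$ and insert the weight $u^{-2\delta}$ with $\delta=4V/\log y$. Lemma~\ref{lem:smooth-second} shows that the contribution of $u<x^{1/4}$ (where the integrand is bounded by $u^{-2-2\delta}$) and the tail $u>X$ are small relative to $\log y/V$, once $V$ is a large enough constant; by Jensen these terms are subtracted only at the level of first moments. This yields
\begin{align*}
 \E[W_x^q]\gg\Bigl(\frac{x}{\log x}\Bigr)^q\Bigl\{\E\Bigl[\Bigl(\int_1^\infty|T_y(u)|^2\frac{du}{u^{2+2\delta}}\Bigr)^q\Bigr]-O\Bigl(\Bigl(\frac{\log y}{V^2}\Bigr)^q\Bigr)\Bigr\}.
\end{align*}
The subtraction is legitimate because $(A-B)^q\geq A^q-B^q$ when $A\geq B\geq0$, applied on the event where the tail is smaller.

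\textbf{Step 3: apply the Euler-product lower bound.} By Lemma~\ref{lem:mellin}, the integral equals $(2\pi)^{-1}\int_\R|F_y(1/2+\delta+it)|^2|1/2+\delta+it|^{-2}\,dt$. This is at least a constant times $\int_J|F_y(1/2+\delta+it)|^2\,dt$ with $J=[1/3,1/2]$. Lemma~\ref{lem:general-lower-mass} then gives a lower bound $\gg_d(\log y/(VD_q(y)))^q$.

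Fixing $V$ as a large constant, this dominates the subtracted term $(\log y/V^2)^q$, since $D_q(y)\ll\sqrt{\log\log y}$. However, that comparison requires $V^{q}\gg D_q(y)^q$, which fails for $q$ near $2/3$ unless $V$ is large.

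\textbf{Main obstacle.} The key difficulty is the error control in Step 2. Subtracting tails at the level of first moments is too lossy, because the main term carries the factor $D_q^{-q}$ while the tails do not. The remedy, following Harper, is to take $V\asymp D_q(y)^{2}$, which is admissible since $V\leq(\log y)^{1/100}$. The genuine tails are then estimated more carefully: the range $u\leq x^{1/4}$ is absorbed directly, because $\delta\log u\geq V$ makes the weight $e^{-2V}$ tiny there, and the region $u>X$ is handled through the exact identity $S=T_y$ together with Lemma~\ref{lem:smooth-second}, giving a bound of order $e^{-cV}\log y$. With these choices the tails are negligible and the lemma follows, since $\log y\asymp\log x$ and $D_q(y)\asymp D_q(x)$.
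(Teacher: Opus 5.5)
Your proposal has a genuine gap in how you subtract the parts of the Mellin integral you do not keep. Lemma~\ref{lem:general-lower-mass} gives only $\gg_d(\log y/(VD_q(y)))^q$, so $V$ must stay bounded, and every subtracted term must itself carry the factor $D_q^{-q}$. First-moment bounds (via Jensen) cannot supply that factor. Your remedy $V\asymp D_q(y)^2$ defeats itself: it turns Lemma~\ref{lem:general-lower-mass} into $(\log y/D_q(y)^3)^q$, so you would end with $\E[W_x^q]\gg(x/D_q(x)^3)^q$. That misses the lemma by the unbounded factor $D_q(x)^{2q}$, which is of size $(\log\log x)^{2/3}$ at $q=2/3$. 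Your tail bound is also not what Lemma~\ref{lem:smooth-second} gives: integrating it yields $\E\int_y^\infty|T_y(u)|^2u^{-2-2\delta}\,du\ll_d(\log y)^2e^{-8V}/V$, with an extra factor $\log y$. The missing idea, used in the paper as in Harper, is to dominate the tail pointwise by a less damped Mellin integral and then invoke the \emph{upper} Euler-product estimate. Write $J_x(\eta)=\int_1^\infty|T_x(u)|^2u^{-2-2\eta}\,du$ and $I_x=\int_1^{\sqrt x/2}|S(u)|^2u^{-2}\,du$, and take $\alpha=4V/\log x$, $\beta=2V/\log x$, $T=\sqrt x/2$. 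Then $J_x(\alpha)\leq I_x+T^{-2(\alpha-\beta)}J_x(\beta)\leq I_x+e^{-V}J_x(0)$. Lemma~\ref{lem:mellin} and Proposition~\ref{prop:general-mass-upper} give $\E[J_x(0)^q]\ll_d(\log x/D_q(x))^q$. The tail now carries the same $D_q^{-q}$ as the main term, so $\E[I_x^q]\geq(c_dV^{-q}-c'_de^{-Vq})(\log x/D_q(x))^q$, and a fixed large $V$ suffices.

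Second, restricting to $p\leq x^{3/4}$, i.e.\ to $u\geq x^{1/4}$, is fatal rather than merely wasteful. For $u\geq x^{1/4}=y^{1/2}$ the weight $u^{-2\delta}=\exp(-8V\log u/\log y)$ is at most $e^{-4V}$. By Proposition~\ref{prop:general-mass-upper}, that range therefore contributes at most $O_d(e^{-4Vq}(\log y/D_q(y))^q)$ to $\E[(\int_1^\infty|T_y(u)|^2u^{-2-2\delta}du)^q]$. For large $V$, the lower bound of Lemma~\ref{lem:general-lower-mass} must then come from $u<x^{1/4}$, exactly the range you discard, and it says nothing about the range you keep. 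Your justification that $\delta\log u\geq V$ on $u\leq x^{1/4}$ is backwards: there $\delta\log u\leq2V$, tending to $0$ as $u\to1$, so no saving in $V$ is available. Finally, smoothing with relative length $1/\sqrt x$ in Step~1 requires primes in intervals of length $t/\sqrt x\leq t^{1/3}$ for $t\leq x^{3/4}$ (length about $1$ near $t=\sqrt x$), which is not available. Take relative length $1/H$ with $H=(\log x)^2$, as the paper does. The prime number theorem with the de la Vall\'ee Poussin error term gives $\sum_{t/(1+H^{-1})<p\leq t}(\log p)/p\asymp H^{-1}$ for all $2\sqrt x\leq t\leq x$. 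You can then keep every prime in $(\sqrt x,x]$ and the full range $1\leq u\leq\sqrt x/2$. Moreover, \eqref{eq:short-second} bounds the smoothing error by $O_d(x)$ in the $\log p$-weighted sum, i.e.\ by $O_d(x/\log x)$ at the scale of $W_x$.
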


\begin{proof}
Put $H=(\log x)^2$ and define
\begin{align*}
 \mathcal Q_x&:=\sum_{\sqrt x<p\leq x}(\log p)|S(x/p)|^2,\\
 \mathcal M_x&:=\sum_{\sqrt x<p\leq x}\frac{H\log p}{p}
             \int_p^{p(1+H^{-1})}|S(x/t)|^2dt,\\
 \mathcal R_x&:=\sum_{\sqrt x<p\leq x}\frac{H\log p}{p}
             \int_p^{p(1+H^{-1})}|S(x/p)-S(x/t)|^2dt.
\end{align*}
The elementary inequality $|z_1|^2\leq2|z_2|^2+2|z_1-z_2|^2$ gives
\begin{align*}
 \mathcal M_x\leq2\mathcal Q_x+2\mathcal R_x.
\end{align*}
By partial summation and \eqref{eq:short-second}, we have
\begin{align*}
 \E[\mathcal R_x]
 &\ll_d\sum_{\sqrt x<p\leq x}(\log p)
       \left(\frac{x}{pH}+(x/p)^{5/6}\right)
 \\
 &\ll_d\frac{x\log x}{H}
       +x^{5/6}x^{1/6}\ll_d x.
\end{align*}

The prime number theorem with the de la Vall\'ee Poussin error term gives,
uniformly for
$2\sqrt x\leq t\leq x$ that
\begin{align*}
 \sum_{t/(1+H^{-1})<p\leq t}\frac{\log p}{p}\asymp H^{-1}.
\end{align*}
Changing the order of integration, we therefore obtain
\begin{align*}
 \mathcal M_x
 \gg\int_{2\sqrt x}^x|S(x/t)|^2dt
 =x\int_1^{\sqrt x/2}|S(u)|^2\frac{du}{u^2}.
\end{align*}
Writing the last integral as $I_x,$ we use
$\mathcal Q_x\leq(\log x)W_x,$ the inequality $(u+v)^q\leq u^q+v^q,$
and Jensen's inequality to obtain
\begin{equation}\label{eq:variance-smoothing}
 \E[W_x^q]
 \geq c\left(\frac{x}{\log x}\right)^q\E[I_x^q]
 -c_d\left(\frac{x}{\log x}\right)^q.
\end{equation}

For $\eta\geq0,$ put
\begin{align*}
 J_x(\eta):=\int_1^\infty|T_x(u)|^2\frac{du}{u^{2+2\eta}}.
\end{align*}
Given $1\leq V\leq(\log x)^{1/100},$ take
\begin{align*}
 \alpha=4V/\log x,
 \qquad
 \beta=2V/\log x,
 \qquad
 T=\sqrt x/2.
\end{align*}
Since $T_x(u)=S(u)$ for $u\leq T,$ splitting the integral at $T$ gives
\begin{align*}
 J_x(\alpha)
 &\leq I_x+T^{-2(\alpha-\beta)}J_x(\beta)\\
 &\leq I_x+e^{-V}J_x(0)
\end{align*}
for $x\geq16,$ where we used $J_x(\beta)\leq J_x(0).$  Using $(u+v)^q\leq u^q+v^q,$ we obtain
\begin{equation}\label{eq:ix-from-jx}
 \E[I_x^q]\geq\E[J_x(\alpha)^q]-e^{-Vq}\E[J_x(0)^q].
\end{equation}

By Lemma~\ref{lem:mellin} and Proposition~\ref{prop:general-mass-upper}, we
have
\begin{align*}
 \E[J_x(0)^q]\ll_d\left(\frac{\log x}{D_q(x)}\right)^q.
\end{align*}
Since $|1/2+\alpha+it|\asymp1$ on $J=[1/3,1/2],$ restricting the
Mellin integral to $J$ and applying Lemma~\ref{lem:general-lower-mass} gives
\begin{align*}
 \E[J_x(4V/\log x)^q]
 \gg_d\left(\frac{\log x}{V D_q(x)}\right)^q.
\end{align*}
Inserting these estimates into \eqref{eq:ix-from-jx}, we obtain
\begin{align*}
 \E[I_x^q]
 \geq(c_dV^{-q}-c'_de^{-Vq})
       \left(\frac{\log x}{D_q(x)}\right)^q.
\end{align*}
Choosing a sufficiently large fixed $V$ in terms of $d$ makes the
coefficient positive uniformly for $q\geq2/3,$ and substitution in
\eqref{eq:variance-smoothing} proves the lemma after absorbing the remaining
error for sufficiently large $x.$
\end{proof}

We may now complete the proof in the remaining range of moments by the same
interpolation used in \cite[Section~1.3]{HarperLow}.

\begin{proof}[Completion of the proof of Theorem~\ref{thm:main}]
The upper bound follows from Proposition~\ref{prop:sum-upper}, while
Lemma~\ref{lem:conditional-lower} and Lemma~\ref{lem:variance-lower} give
the lower bound for $2/3\leq q\leq1.$  For $0<q<2/3,$ put
\begin{align*}
 M:=\frac{x}{\sqrt{\log\log x}}.
\end{align*}
The lower bound at $q=2/3$ and the upper bound at $q=3/4$ give
\begin{align*}
 \E[|S(x)|^{4/3}]\gg_d M^{2/3},
 \qquad
 \E[|S(x)|^{3/2}]\ll_d M^{3/4}.
\end{align*}
Applying H\"older's inequality with $\theta=(9-12q)^{-1}$ and
$4/3=2q\theta+(3/2)(1-\theta),$ we obtain
\begin{align*}
 \E[|S(x)|^{2q}]
 \geq
 \frac{(\E[|S(x)|^{4/3}])^{9-12q}}
      {(\E[|S(x)|^{3/2}])^{8-12q}}
 \gg_d M^q.
\end{align*}
The powers of the implied constants are bounded uniformly in this range,
where $D_q(x)\asymp\sqrt{\log\log x},$ and the theorem follows, with
the endpoint $q=0$ immediate.
\end{proof}

\section{Further examples and the prime-square condition}
\label{sec:examples}

We first verify the assertions about compact groups by the \textit{Frobenius--Schur
trichotomy} (see, for instance, \cite[Section~2.4]{MatzTemplier}).
For a nontrivial irreducible unitary representation $V$ of a compact group
$G$ with character $\chi_V,$ the identities
\begin{align*}
 h_1(\boldsymbol\alpha)=\chi_V,
 \qquad
 h_2(\boldsymbol\alpha)=\chi_{\operatorname{Sym}^2V},
\end{align*}
and orthogonality relations give the three expectations stated there.
If $V$ is non-self-dual, which is called \textit{unitary type}, then
$(V\otimes V)^G=0.$  Otherwise, the invariant bilinear form is unique up to
scaling and is either alternating or symmetric, corresponding to \textit{symplectic}
or \textit{orthogonal type}, respectively.  Therefore, we have
\begin{align*}
 \E[h_2(\boldsymbol\alpha)]=\dim(\operatorname{Sym}^2V)^G
 =\begin{cases}
 0&\text{if $V$ is of unitary or symplectic type},\\
 1&\text{if $V$ is of orthogonal type},
 \end{cases}
\end{align*}
which verifies the claimed scope of Theorem~\ref{thm:main}.

For $V=\operatorname{Sym}^m(\C^2)$ as in Example~\ref{ex:odd-sym}, the invariant form induced by the
$m$-fold tensor power of the determinant pairing has symmetry $(-1)^m.$
Its restriction to $V$ pairs $e_1^m$ and $e_2^m$ nontrivially and is
therefore nondegenerate by irreducibility, which gives symplectic type for
odd $m$ and orthogonal type for even $m\geq2.$

The symmetric-square case shows that the final condition in
\eqref{eq:conditions} cannot simply be discarded while retaining the same
conclusion.

\begin{proposition}\label{prop:sym-square}
Let $\X_2$ be the random multiplicative function with local parameters $ e^{2i\theta_p}, 1, e^{-2i\theta_p},$ where the $\theta_p \in [0,\pi]$ are independent Sato--Tate angles. Then
\begin{align*}
 \E\left[\left|\sum_{n\leq x}\X_2(n)\right|^2 \right]=x\log x+O(x),
 \end{align*}
 and
 \begin{align*}
 \E\left[\sum_{n\leq x}\X_2(n) \right]=\lfloor\sqrt x\rfloor.
\end{align*}
\end{proposition}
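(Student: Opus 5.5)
Both identities come from the local structure at a single prime, encoded in the kernel $g$ of Lemma~\ref{lem:covariance}. For $\operatorname{Sym}^2$ of $SU(2)$ we have $h_k(\boldsymbol\alpha)=\chi_{\operatorname{Sym}^k(\operatorname{Sym}^2\C^2)}$.

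\textbf{The mean.} By independence, $\E[\X_2(n)]=\prod_{p^k\|n}\E[h_k(\boldsymbol\alpha(p))]$, so I need the local values $\E[h_k]$. These come from the generating function
\begin{align*}
 \sum_{k\ge0}\E[h_k]z^k=\E\Bigl[\bigl((1-e^{2i\theta}z)(1-z)(1-e^{-2i\theta}z)\bigr)^{-1}\Bigr],
\end{align*}
taken with the Sato--Tate density $(2/\pi)\sin^2\theta$. Equivalently, $\E[h_k]=\dim(\operatorname{Sym}^k\operatorname{Sym}^2\C^2)^{SU(2)}$. Since $\operatorname{Sym}^k\operatorname{Sym}^2\C^2$ is the space of degree-$k$ polynomials in the entries of a symmetric $2\times2$ form, its invariants are generated by the discriminant, which has degree $2$. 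Hence
\begin{align*}
 \E[h_k]=\begin{cases}1&k\text{ even},\\0&k\text{ odd},\end{cases}
\end{align*}
and the generating function is $(1-z^2)^{-1}$. I will double-check $k=1,2$ directly: $\E[h_1]=\E[1+2\cos2\theta]=0$, and $\E[h_2]=1$ since the representation is of orthogonal type. It follows that $\E[\X_2(n)]=1$ if $n$ is a square and $0$ otherwise, so $\E\bigl[\sum_{n\le x}\X_2(n)\bigr]=\lfloor\sqrt x\rfloor$.

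\textbf{The second moment.} I will use \eqref{eq:covariance}:
\begin{align*}
 \E\bigl[|S(x)|^2\bigr]=\sum_{u,v}g(u,v)\,\#\{r\le x/\max(u,v)\}.
\end{align*}
The key difference from the centered case is that $g(p,p)=c(1,1)-c(0,0)=0$ but
\begin{align*}
 g(p^2,1)=g(1,p^2)=c(2,0)=\E[h_2]=1,
\end{align*}
while $g(p,1)=0$. So the local Dirichlet series $G(s)=\sum_{u,v}g(u,v)\max(u,v)^{-s}$ has Euler factors $1+2p^{-2s}+O(p^{-3s})$ near $s=1/2$, and $G(s)$ behaves like $\zeta(2s)^2$ times a factor holomorphic and nonvanishing on $\Re s>1/3$. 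I will check that $g(p^2,p)$ and $g(p,p^2)$ contribute only at order $p^{-3s}$ in $\max$ form, which they do, since the max is $p^2$, giving $p^{-2s}$. The bound $|g(p^2,p)|$ needs computing: $c(2,1)=\E[h_2\overline{h_1}]$, which may be nonzero. I will compute all $g(p^a,p^b)$ with $\max(a,b)=2$ and collect their sum $\kappa$, so the factor is $1+\kappa p^{-2s}+\dots$. The claim $x\log x$ requires $\kappa=2$, i.e.\ the terms $g(p^2,p)+g(p,p^2)+g(p^2,p^2)$ must cancel or vanish.

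\textbf{Summation and main obstacle.} Write $\E|S(x)|^2=\sum_{m}a(m)\lfloor x/m\rfloor$, where $a(m)=\sum_{\max(u,v)=m}g(u,v)$. Then $\sum_m a(m)/m\cdot x$ diverges logarithmically: $\sum_{m\le x}a(m)/m$ has squares $m=k^2$ with weight roughly $2^{\omega(k)}/k^2$, which converges. Hence a double pole of $\zeta(2s)^2$ at $s=1/2$ gives a main term $\asymp\sqrt x\log x$ for $\sum_{m\le x}a(m)$, not $x\log x$.

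So the correct mechanism must be different. I will recompute the $r$-count weights rather than just $\max$ form: the count $x/\max(u,v)$ gives $\sum_m a(m)/m$, which converges, and that alone yields only $O(x)$. The $x\log x$ must therefore come from $a(m)$ having $\sum_{m\le X}a(m)/m\sim\log X$. This means $a(p)\ne0$, i.e.\ $g(p,p)\ne0$. Indeed $c(1,1)=\E|h_1|^2=\E(1+2\cos2\theta)^2=1+2=3$ here, giving $g(p,p)=2$, so $a(p)=2$ and $\sum a(m)/m^{s}\approx\zeta(s)^2$. The main obstacle is computing these local values, especially $c(1,1)=3$ for this non-irreducible-$h_1$ case. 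With $\sum_m a(m)m^{-s}=\zeta(s+1)^2\cdot E(s)$ with $E$ convergent, partial summation of $x\sum_{m\le x}a(m)/m-\sum_m a(m)\{x/m\}$, with the fractional-part error bounded by $\sum_{m\le x}|a(m)|\ll x$, yields $x\log x+O(x)$ once $E(0)=1$ is checked. If $E(0)\ne1$, I will recheck normalization, since a leading constant $E(0)$ times $\tfrac12\log^2$ would otherwise appear.
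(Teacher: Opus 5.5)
Your computation of the mean is fine. You get $\E[h_k]=\mathbf{1}_{2\mid k}$ from the invariant theory of binary quadratic forms, whereas the paper reads it off $h_k=\sum_{0\le j\le k/2}\chi_{2k-4j}(\theta)$ and orthogonality. The second-moment argument, however, breaks at two concrete points.

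First, $c(1,1)=\E[h_1^2]$ equals $1$, not $3$. The same value $\E[\cos2\theta]=-1/2$ that you used to get $\E[h_1]=0$ gives $\E[(1+2\cos2\theta)^2]=1-2+2=1$; your $3$ is the value for uniform $\theta$. This must be so, since $h_1=\chi_2$ is the character of the \emph{irreducible} representation $\operatorname{Sym}^2\C^2$. Hence $g(p,p)=0$, and the mechanism you finally settle on does not exist. Even on its own terms it gives the wrong order: with $a(p)=2$ the weights $a(m)$ behave like the divisor function, so $\sum_{m\le x}a(m)/m\sim\frac{E(0)}{2}\log^2x$ and the main term would be of order $x\log^2x$. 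Your last sentence leaves this unresolved.

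Second, your earlier dismissal of the square terms treats $\sum_{u,v}g(u,v)\max(u,v)^{-s}$ as an Euler product. But $\max(u_1u_2,v_1v_2)\neq\max(u_1,v_1)\max(u_2,v_2)$, so $a(m)=\sum_{\max(u,v)=m}g(u,v)$ is not multiplicative, and the heuristic $a(k^2)\approx 2^{\omega(k)}$ is false.

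The missing step is the full local computation of $c(k,l)=\E[h_kh_l]$ for all $k,l$, not just for $\max(k,l)\le2$. From $h_k=\sum_{0\le j\le k/2}\chi_{2k-4j}$ and orthogonality,
\[
\sum_{k,l}c(k,l)z^kw^l=\bigl((1-zw)(1-z^2)(1-w^2)\bigr)^{-1},
\qquad
\sum_{k,l}g(p^k,p^l)z^kw^l=\bigl((1-z^2)(1-w^2)\bigr)^{-1}.
\]
So $g(u,v)=1$ when $u$ and $v$ are both squares, and $g(u,v)=0$ otherwise. Then $a(k^2)=2k-1$, and
\[
\E\left[\left|\sum_{n\le x}\X_2(n)\right|^2\right]=\sum_{k\le\sqrt x}(2k-1)\left\lfloor\frac{x}{k^2}\right\rfloor=x\log x+O(x).
\]
This is exactly the paper's count $\sum_{r\le x}\lfloor\sqrt{x/r}\rfloor^2$, reorganized by $\max(u,v)$. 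The logarithm comes from the $2k-1$ off-diagonal square pairs $(a^2,b^2)$ with $\max(a,b)=k$, not from $g(p,p)$.
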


\begin{proof}
Write $h_k=h_k(e^{2i\theta},1,e^{-2i\theta})$ and
$\chi_m(\theta)=\sin((m+1)\theta)/\sin\theta.$ For $|z|<1,$ we have
\begin{gather*}
 \sum_{r\geq0}\chi_{2r}(\theta)z^r
 =\frac{1+z}{(1-e^{2i\theta}z)(1-e^{-2i\theta}z)},\\
 \sum_{k\geq0}h_kz^k
 =\frac1{1-z^2}\sum_{r\geq0}\chi_{2r}(\theta)z^r,
 \qquad \text{where }
 h_k=\sum_{0\leq j\leq k/2}\chi_{2k-4j}(\theta).
\end{gather*}
Character orthogonality now gives
\begin{align*}
 \sum_{k,l\geq0}\E[h_kh_l]z^kw^l
 &=\sum_{r,j_1,j_2\geq0}z^{r+2j_1}w^{r+2j_2}\\
 &=\frac1{(1-zw)(1-z^2)(1-w^2)}.
\end{align*}
Independence over the primes therefore gives
\begin{align*}
 \E[\X_2(m)\X_2(n)]
 =\#\{(r,u,v)\in\N^3:m=ru^2,\ n=rv^2\}.
\end{align*}
Summing over $m,n\leq x,$ we obtain
\begin{align*}
 \E\left[\left|\sum_{n\leq x}\X_2(n)\right|^2\right]
 &=\sum_{r\leq x}\left\lfloor\sqrt{\frac{x}{r}}\right\rfloor^2\\
 &=x\sum_{r\leq x}\frac1r
   +O\left(\sqrt x\sum_{r\leq x}r^{-1/2}\right)\\
 &=x\log x+O(x).
\end{align*}
The character identity gives $\E[h_k]=1$ for even $k$ and $\E[h_k]=0$ for odd
$k.$  Hence $\E[\X_2(n)]$ is the indicator function of the squares, which
proves the first-moment formula.
\end{proof}

In particular, it rules out
the order of magnitude in Theorem~\ref{thm:main} at $q=1/2$
and $q=1$, respectively.
More generally, suppose only that $\E[h_1(\boldsymbol\alpha)]=0,$ and put
\begin{align*}
 v:=\E[|h_1(\boldsymbol\alpha)|^2],
 \qquad
 \rho:=\E[h_1(\boldsymbol\alpha)^2],
 \qquad
 \kappa:=\E[h_2(\boldsymbol\alpha)].
\end{align*}
Uniformly for $0\leq q\leq1,\sigma\geq1/2,t\in\R,$ a local
Taylor expansion gives
\begin{equation*}
 \log\E[|F_y(\sigma+it)|^{2q}]
 =q^2v\sum_{p\leq y}p^{-2\sigma}
+\Re\left\{(2q\kappa+q(q-1)\rho)
       \sum_{p\leq y}p^{-2\sigma-2it}\right\}+O_d(1).
\end{equation*}
Indeed, the analytic branch at the origin satisfies
\begin{align*}
 \left(\sum_{k\geq0}h_kz^k\right)^q
 =1+qh_1z+\left(qh_2+\frac{q(q-1)}2h_1^2\right)z^2
 +O_d(|z|^3).
\end{align*}
After multiplying by the conjugate series, taking expectations and then
logarithms, the errors are absolutely summable over the primes.

The Steinhaus RMF is the case $d=1$ of
Example~\ref{ex:steinhaus}, whereas a Rademacher RMF has local factor
$1+\varepsilon_pz,$ which is not an inverse Euler factor with unitary
parameters and is therefore not automorphic in the sense of
Definition~\ref{def:armf}.  Its low moments nevertheless have the same order
by \cite[Theorem~2]{HarperLow}.  The extended Rademacher RMF is the
orthogonal model attached to $O(1)$,\footnote{Here orthogonal refers to the representation of $O(1)$,
not to the random-matrix symmetry of the quadratic Dirichlet
$L$-function family, which is symplectic (see, for instance, \cite[Section~4.4]{MR2149530}).}
for which $h_1=\varepsilon_p$ and $h_2=1,$ and hence it satisfies the first
two conditions in \eqref{eq:conditions}, but not the third.

\section*{Acknowledgements}

The author would like to thank Jad Hamdan and Mo Dick Wong for helpful discussions, and acknowledges the use of ChatGPT for performing sanity checks and for rephrasing. This work is supported by the Croucher Fellowship for Postdoctoral Research.

\printbibliography

\end{document}